\documentclass[12pt,reqno]{amsart}

\usepackage[centering=true]{geometry}
\usepackage[utf8]{inputenc}
\usepackage[T1]{fontenc}
\usepackage[english]{babel}

\usepackage[colorlinks=true,linkcolor=teal,citecolor=teal]{hyperref}
\usepackage{cite}

\usepackage{mathtools}
\usepackage{amssymb}
\usepackage{amsthm}
\usepackage{nccmath}

 \usepackage{tikz}
\usepackage{graphicx}
\usepackage{xcolor}
\usepackage[makeroom]{cancel}

\theoremstyle{definition}
\newtheorem{definition}{Definition}

\newtheorem{example}{Example}

\theoremstyle{plain}
\newtheorem{theorem}{Theorem}
\newtheorem{lemma}{Lemma}
\newtheorem{corollary}{Corollary}
\newtheorem{proposition}{Proposition}

\theoremstyle{remark}
\newtheorem{remark}{Remark}

\newcommand{\norm}[1]{\left\|#1\right\|}
\newcommand{\metric}[2]{\left\langle#1,#2\right\rangle}
\newcommand{\grad}[0]{\mathrm{grad}}
\newcommand{\Hess}[0]{\mathrm{Hess}}
\newcommand{\tr}[0]{\operatorname{tr}}
\newcommand{\Ric}[0]{\operatorname{Ric}}

\makeatletter
\newcommand{\rel@kern}[1]{\kern#1\dimexpr\macc@kerna}
\newcommand{\widebar}[1]{\begingroup \def\mathaccent##1##2{\rel@kern{0.8} \overline{\rel@kern{-0.8}\macc@nucleus\rel@kern{0.2}} \rel@kern{-0.2}} \macc@depth\@ne \let\math@bgroup\@empty \let\math@egroup\macc@set@skewchar \mathsurround\z@ \frozen@everymath{\mathgroup\macc@group\relax} \macc@set@skewchar\relax \let\mathaccentV\macc@nested@a \macc@nested@a\relax111{#1} \endgroup}
\makeatother

\begin{document}

\title[Geometric Properties and Spectral Estimates on Warped Products]{Geometric Properties and Spectral Estimates on Warped Products}

\author[Mel\'endez]{Josu\'e Mel\'endez}
\address[Mel\'endez]{Departamento de Matem\'aticas, Universidad Aut\'onoma Metropolitana-Iztapalapa \\ CP 09340 \\ M\'exico City \\ M\'exico}
\email[Corresponding author]{jms@xanum.uam.mx}

\author[Rodr\'iguez-Romero]{Eduardo Rodr\'iguez-Romero}
\address[Rodr\'iguez-Romero]{Departamento de Matem\'aticas, Universidad Aut\'onoma Metropolitana-Iztapalapa \\ CP 09340 \\ M\'exico City \\ M\'exico}
\email{err29072019@gmail.com}

\author[Torres-Orozco]{Jonatán Torres Orozco}
\address[Torres-Orozco]{Departamento de Matem\'aticas, Universidad Aut\'onoma Metropolitana-Iztapalapa \\ CP 09340 \\ M\'exico City \\ M\'exico}
\email{jonatan.tto@gmail.com}

\subjclass{53C40; 53C42}

\keywords{warped product, Ricci curvature, integral inequality, normal hypersurfaces, spectral theorems}

\date{\today}

\begin{abstract}
    We establish an integral inequality for the Ricci curvature of a certain class of warped products $M\times_fN$, where the equality holds if and only if it is simply a Riemannian product. We also give a sufficient condition for the intersection of a warped product $M=\mathbb{R}\times_fP$ with a totally geodesic hypersurface $N$ in an arbitrary Riemannian space to be a totally geodesic slice of $M$. In addition, we establish some spectral estimates for the Laplacian of a submanifold $N$ that intersects a warped product in the same ambient manifold.
\end{abstract}

\maketitle

\section{Introduction and main results}

One of the most useful extensions of the cartesian product of Riemannian manifolds is the notion of warped product, first defined in \cite[Section 7]{bishop-oneill}. As a generalization, the warped products have given rise to a large family of interesting and useful examples of Riemannian manifolds, including some fundamental ones in general relativity (see \cite{oneill} as a reference).
\medskip

{Furthermore, many natural examples of warped products $ {M}=I\times_f P$ arise from \emph{cohomogeneity one isometric actions}. A Lie group $G$ acts isometrically by \emph{cohomogeneity one} on a Riemannian manifold $({M},g)$ if $\dim({M}/G)=1$. 
The theory states that there are only two types of orbits: those with maximal dimension, i.e., $N-1$, called {\em principal orbits}; and those with dimension $k<N-1$, called {\em singular orbits}. If $M/G=[a, b]$, then there exists exactly two singular orbits, and only one principal orbit. Then the regular part ${M}_{\rm reg}$ of $M$, that is, $M$ minus its singular orbits, is equivariantly diffeomorphic to
\[
{M}_{\mathrm{reg}} \cong I \times P
\]
where $I\subset\mathbb{R}$ is an interval parametrizing the orbit space. The metric takes the form $g = dt^2 + g_t, $
where $g_t$ is a $G$-invariant metric on the orbit $P$. Nevertheless, in many symmetric situations, one has that the metric in the regular part can be written as
\[
g=dt+ f(t)^2 g_P ,
\]
for some fixed metric $g_P$ on $P$. We refer the interested reader to Section 7 from \cite{FePaTo23} for explicit examples, and to \cite{AlexBettiol} for details on cohomogeneity one actions.}
\newline

On the other hand, the study of submanifolds in warped products has also been a very active area in differential geometry (see, for instance, \cite{alias-dajczer,chen} and the references therein). {In this work, our interest lies in the study of curvatures properties of warped products and intersections of submanifolds, one of which is a warped product.}
\newline

Recently in \cite{melendez-hernandez}, Mel\'endez and Hern\'andez obtained an integral inequality of the Ricci curvature for the warped product $S^1\times_fN$, which gives a characterization of the simple product $S^1\times N$, where $N$ is a compact Riemannian manifold. In fact, they proved that in a warped product $\widebar{M}=S^1\times_fN$ the following estimate on $\widebar{M}$ holds:
\[
\int_{\widebar{M}} \Ric(\partial_t,\partial_t)\,\mathrm{d}{\widebar{M}^n}\geq0.
\]
%
%
Moreover, equality holds if and only if $\widebar{M}$ is simply a Riemannian product.

Following the main idea of the proof (see Theorem 4 in \cite{melendez-hernandez}), in our first result, we establish the next integral inequality for a more general warped product $M\times_fN$.

\begin{theorem} \label{theo:integral-inequality}
   Let $(M,g_M)$ and $(N,g_N)$ be Riemannian manifolds with $\dim M=m$, and assume that $M$ is Ricci flat. Let $\widebar{M}=M\times_f N$ be a compact warped product with warping function $f$. Then
    \begin{equation}
    \label{eqn:integral-inequality}
        \sum_{k=1}^m \int_{\widebar{M}} \Ric(E_k,E_k)\,\mathrm{d}{\widebar{M}}\geq0,
    \end{equation}
    where $\Ric$ denotes the Ricci curvature of $\widebar{M}$ and $\{E_1,\dots,E_m\}$ is a frame on $M$. Moreover, equality holds if and only if $\widebar{M}$ is a Riemannian product.
\end{theorem}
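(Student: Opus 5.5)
We use the standard warped product curvature formula (O'Neill): for horizontal vectors $X,Y$ tangent to $M$,
\[
\Ric(X,Y)=\Ric_M(X,Y)-\frac{d}{f}\Hess_M f(X,Y),
\]
where $d=\dim N$ and $f$ is regarded as a function on $\widebar{M}$ via the projection onto $M$. Take $\{E_1,\dots,E_m\}$ to be a local orthonormal frame on $M$ lifted horizontally to $\widebar{M}$. Since $M$ is Ricci flat, summing over $k$ gives
\[
\sum_{k=1}^m\Ric(E_k,E_k)=-\frac{d}{f}\,\Delta_M f .
\]
This sum is independent of the choice of orthonormal frame, so it defines a global function on $\widebar{M}$.

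Next we integrate over $\widebar{M}$. Its volume form is $\mathrm{d}\widebar{M}=f^{d}\,\mathrm{d}M\wedge \mathrm{d}N$. Since $\widebar{M}$ is compact, both $M$ and $N$ are compact. By Fubini,
\[
\sum_{k}\int_{\widebar{M}}\Ric(E_k,E_k)\,\mathrm{d}\widebar{M}=-d\,\mathrm{Vol}(N)\int_M f^{d-1}\Delta_M f\,\mathrm{d}M .
\]
Integrating by parts on the closed manifold $M$, the right-hand side becomes
\[
d(d-1)\,\mathrm{Vol}(N)\int_M f^{d-2}|\nabla f|^2\,\mathrm{d}M \;\ge\; 0,
\]
which proves \eqref{eqn:integral-inequality}. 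If $d=1$, the integral vanishes identically, because $\int_M \Delta_M f=0$.

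For the equality case with $d\ge2$: since $f>0$, equality forces $\nabla f\equiv0$. As $M$ is connected, $f$ is constant, so after rescaling $g_N$ the space $\widebar{M}$ is a Riemannian product. Conversely, if $f$ is constant then every term vanishes.

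The main obstacle is the case $d=\dim N=1$, where the integral is identically zero for any $f$. Here equality does not force $f$ to be constant, so the "only if" direction fails as stated. In that case I would either assume $\dim N\ge2$ or argue separately. For comparison, the $S^1\times_fN$ situation of \cite{melendez-hernandez} has the roles reversed: there the base $S^1$ is one-dimensional, and the argument relies on $\dim N\ge 2$ in the same way. I would therefore state the hypothesis $\dim N\ge2$ explicitly and carry out the computation above.
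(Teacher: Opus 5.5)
Your proof is correct, and it reaches the same identity as the paper through a different mechanism. The paper works pointwise on the total space. It sets $u=\ln f$ and uses Lemma~\ref{lemma:dobarro} to write $\Delta u=\Delta^M u+\frac{n}{f}\metric{\grad^M f}{\grad^M u}_M$. This gives $n\Delta u=-\sum_k\Ric(E_k,E_k)+n(n-1)\norm{\grad^M f}_M^2/f^2$ with $n=\dim N$, and integrating over the closed manifold $\widebar{M}$ removes the left-hand side.

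You instead split the volume form as $f^{d}\,dM\,dN$, apply Fubini, and integrate by parts on the base against the weight $f^{d-1}$. The two are the same computation in different clothing. For functions pulled back from $M$, the warped Laplacian is $\Delta u=f^{-n}\operatorname{div}_M(f^{n}\grad^M u)$. For $u=\ln f$ this becomes $f^{-n}\operatorname{div}_M(f^{n-1}\grad^M f)$, and expanding it is exactly your integration by parts.

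Your route is more elementary: it needs only Fubini and Green's formula on $M$, not the Laplacian formula for warped products. It also makes explicit that everything reduces to the weighted base $(M,f^{n}\,dM)$. The paper's route avoids splitting the measure and gives a pointwise identity on $\widebar{M}$ before integrating. Both end with $n(n-1)\int_{\widebar{M}}\norm{\grad^M f}_M^2/f^2$.

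Your caveat about $\dim N=1$ is correct and worth keeping. In that case the integral vanishes for every $f$, so the rigidity part of the statement fails as written. For example, $S^1\times_f S^1$ has total Gaussian curvature zero but is not flat unless $f$ is constant. The paper handles this only implicitly, in two places:
\begin{itemize}
\item the hypothesis $n=\dim N>1$ in Lemma~\ref{lemma:oneill};
\item the final step ``equality iff $\mathcal{H}=0$'', which requires $n(n-1)>0$.
\end{itemize}
Making $\dim N\ge 2$ an explicit hypothesis, as you propose, is the right fix.
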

\smallskip

{The compactness of $M$ is essential.  To see this, consider the case where $M=\mathbb{R}$, then $M$ is automatically Ricci flat. For warped products, the Ricci curvature and the volume form satisfies}
\[
\Ric_{\widebar{M}}(\partial_t,\partial_t)
=
-(n-1)\frac{f''}{f}\partial_t, \quad \mathrm{d}{\widebar{M}^n} = f^{n-1}\,dt\,dN.
\]
Hence by integration by parts,
\[
\int_{\widebar{M}}\Ric(\partial_t,\partial_t)\,\mathrm{d}{\widebar{M}}
=
-(n-1){\rm Vol}(N)
\int_{\mathbb{R}} f''f^{n-2}\,dt
\]
which in general may neither converge nor be non-negative. Nevertheless, we may deduce an immediate corollary for cohomogeneity one manifolds.
\newline

If $(W, g)$ is a cohomogeneity one Riemannian manifold, by a Lie group $G$, it is well known that the orbit space is (diffeomorphic to) one of the following: i) $W/G=\mathbb{R}$, ii) $W/G=S^1$, iii) $W/G=[0, \infty)$, iv) $W/G=I$, for $I$ a closed interval. In cases i) and ii), all the orbits are principal; in cases i) and iii) $W$ is noncompact. If $\pi: W\to W/G$ denotes the quotient map, in case iv) there are exactly two singular orbits corresponding to $\pi^{-1}(-1)$ and $\pi^{-1}(1)$. In case ii), $W$ necessarily has infinite fundamental group.
\medskip

\begin{corollary}
    Let $(W, g)$ be a compact cohomogeneity one Riemannian manifold with orbit space equals $M=S^1$ or $M=I$, with $I$ a closed interval. Then 
    \begin{gather*}
        \int_{W_{reg}} \Ric(\partial_t,\partial_t)\,\mathrm{d}{W}\geq0,
    \end{gather*}
  where $\Ric$ denotes the Ricci curvature of $W$. {Moreover, equality holds if and only if the principal orbits are totally geodesic.}
\end{corollary}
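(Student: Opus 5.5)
The plan is to reduce to a one-dimensional computation on the regular part and integrate by parts along the orbit space. On $W_{reg}\cong I^\circ\times P$ (with $I^\circ$ the open interval, or $S^1$) we use the warped form $g=dt^2+f(t)^2g_P$ recalled in the introduction, with $\dim P=n-1$. As in the computation preceding the corollary,
\[
\Ric(\partial_t,\partial_t)=-(n-1)\frac{f''}{f},\qquad \mathrm{d}W=f^{n-1}\,dt\,dP .
\]
Hence
\[
\int_{W_{reg}}\Ric(\partial_t,\partial_t)\,\mathrm{d}W=-(n-1)\,\mathrm{Vol}(P)\int f''f^{n-2}\,dt .
\]
If $n=2$, the integrand is $f''$ and the argument below simplifies in the obvious way.

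When $M=S^1$, $W$ is itself the compact warped product $S^1\times_f P$, and $S^1$ is Ricci flat. So Theorem~\ref{theo:integral-inequality} applies directly with $m=1$ and $E_1=\partial_t$, or equivalently we integrate by parts:
\[
-\int_{S^1} f''f^{n-2}\,dt=(n-2)\int_{S^1} f^{n-3}(f')^2\,dt\ge 0,
\]
since there are no boundary terms. Equality holds iff $f'\equiv0$.

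When $M=[a,b]$, the same integration by parts on $[a+\epsilon,b-\epsilon]$ gives
\[
-\int_{a+\epsilon}^{b-\epsilon} f''f^{n-2}\,dt=(n-2)\int_{a+\epsilon}^{b-\epsilon} f^{n-3}(f')^2\,dt-\Bigl[f'f^{n-2}\Bigr]_{a+\epsilon}^{b-\epsilon}.
\]
The main obstacle is controlling the boundary term as $\epsilon\to0$. At a singular orbit the principal orbits collapse, so $f\to0$ at the endpoints. Smoothness of the metric across the singular orbit forces $f'$ to stay bounded; in fact $f'(a)=1$ and $f'(b)=-1$ in the standard normalization. Thus for $n\ge3$ the quantity $f'f^{n-2}$ tends to $0$ at both ends and the boundary term vanishes.

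I would check this carefully using the smoothness conditions from \cite{AlexBettiol}. If instead the singular orbit has codimension greater than $2$ so that $f$ does not vanish, I would argue that the same conditions force $f$ to be even about the endpoint, so $f'(a)=f'(b)=0$ and the boundary term again vanishes. The limit of the left side exists by monotone convergence applied to the nonnegative right-hand integrand, which gives the inequality.

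For the equality case, equality holds iff $f^{n-3}(f')^2\equiv0$ on the regular part, i.e.\ $f'\equiv0$ there. Since the second fundamental form of the orbit $\{t\}\times P$ is $-(f'/f)\,g$, this is exactly the condition that the principal orbits are totally geodesic.

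In the interval case one should note that $f'\equiv0$ is incompatible with $f\to0$ at the endpoints. So if the singular orbits truly collapse, the inequality is strict, and equality can occur only in the degenerate situation where the boundary term is handled by evenness. I would remark on this rather than try to refine the equality statement.
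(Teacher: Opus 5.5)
Your argument is correct for $\dim W\ge 3$, and for the interval case it is more complete than what the paper provides. The paper gives no separate proof. It calls the statement an immediate consequence of Theorem~\ref{theo:integral-inequality} with the Ricci-flat one-dimensional base $S^1$ or $I$. That means integrating $(n-1)\Delta\ln f=-\Ric(\partial_t,\partial_t)+(n-1)(n-2)(f'/f)^2$ (the identity from that proof, with fiber dimension $n-1$) over a closed manifold. For $M=S^1$ this is your computation written invariantly.

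For $M=I$, however, $W_{reg}$ is not closed and $\ln f$ blows up at an end where $f\to0$. The divergence theorem then leaves the flux of $\grad\ln f$ through $\{t\}\times P$, namely $\mathrm{Vol}(P)\,f'f^{n-2}$. This is exactly your boundary term, and the paper never addresses it. Your truncation to $[a+\epsilon,b-\epsilon]$ together with the endpoint behaviour of $f$ is what actually proves this case. Identifying the limit with $\int_{W_{reg}}$ is immediate, since $\Ric(\partial_t,\partial_t)$ is bounded on the compact $W$. In exchange, the paper's formulation covers Ricci-flat bases of any dimension, while yours is specific to a one-dimensional orbit space.

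You were also right to make the warped form $dt^2+f(t)^2g_P$ an explicit hypothesis, because for a general cohomogeneity one metric the statement fails. For example, on $S^1\times T^2$ with $dt^2+a(t)^2dx^2+a(t)^{-2}dy^2$ one gets $\Ric(\partial_t,\partial_t)=-2(a'/a)^2$ pointwise, so the integral is negative for nonconstant $a$.

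Two points need correcting.

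First, the case $n=2$ does not ``simplify in the obvious way''. The factor $n-2$ kills the right-hand side, so on $S^1\times_fP$ with $\dim P=1$ you get $\int\Ric(\partial_t,\partial_t)=0$ for every $f$ (Gauss--Bonnet on the torus). The same happens on $[0,\pi]$ with two exceptional ends, e.g.\ a Klein bottle with $f=2+\cos t$. So the equality clause is false in dimension two, and your claim ``equality iff $f'\equiv0$'' needs $n\ge3$. Only the inequality survives there: with two collapsing ends the boundary term is $-[f']_a^b=2$, giving $4\pi$ on $S^2$. The paper shares this defect, since Lemma~\ref{lemma:oneill} assumes fiber dimension $>1$, and Theorem~\ref{theo:integral-inequality} yields $\dim P(\dim P-1)\int\norm{\mathcal{H}}^2=0$ when $\dim P=1$.

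Second, your description of the endpoints is off. With $g_P$ fixed, $f(a)=0$ collapses the entire principal orbit. The singular orbit is then a fixed point, $P$ is a round sphere, and $f'(a)=1$ after normalizing $g_P$. If instead $f(a)>0$, the end orbit has principal dimension, so it is exceptional with $K/H\cong\mathbb{Z}_2$; then $f$ is even there and $f'(a)=0$. The codimension of the singular orbit plays no role. With this dichotomy your boundary analysis is correct. It also shows that equality is impossible as soon as one end is a fixed point, which is consistent with the statement.
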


\medskip

In order to illustrate the application of the theorem we will provide some explicit examples.

\begin{example}
    Consider the warped product $$\widebar{M}^n=T^{n-k}\times_fS^k$$ of the $(n-k)$-dimensional flat torus $T^{n-k}=S^1 \times \cdots \times S^1$ and the standard $k$-sphere $S^k$. Since Ricci tensor of $T^{n-k}$ is identically zero, we have
      \begin{gather*}
        \sum_{k=1}^{n-k}\int_{\widebar{M}} \Ric(\partial_{t_k},\partial_{t_k})\,\mathrm{d}{\widebar{M}}\geq0,
    \end{gather*}
 where $\partial_{t_k}\in TS^1$.
In particular, if $k=n-1$ we obtain  
\begin{gather*}        \widebar{M}=S^1\times_fS^{n-1} 
\end{gather*}
It follows from Theorem \ref{theo:integral-inequality} that
    \begin{gather*}
        \int_{\widebar{M}} \Ric(\partial_t,\partial_t)\,\mathrm{d}{\widebar{M}}\geq0,
    \end{gather*}
    where $\partial_t\in TS^1$. The equality holds if and only if $\widebar{M}$ is isometric to a Clifford hypersurface $S^1\times S^{n-1}$ (see Theorem 4 in \cite{melendez-hernandez} and Theorem 4.1 in \cite{min-seo}).  
\end{example}
\medskip

\begin{example}
A (complex) {\em $K3$ surface}  is simply connected compact complex manifold of dimension 2 with a nowhere-vanishing holomorphic 2-form. It is well-known that all $K3$ surfaces are diffeomorphic to a quartic $X\subset \mathbb{CP}^3$ \cite[Theorem 7.1.1]{Huybrechts}. Moreover, there exists a Kähler metric of vanishing Ricci curvature, due to the Calabi-Yau Theorem (see \cite[Theorem 9.4.11]{Huybrechts}). They $K3$ surfaces are the only compact simply connected Kähler manifolds of dimension $2$ that are Ricci-flat, and they are fundamental in several constructions of higher dimension compact Ricci-flat manifolds. 
\newline

An explicit example of a $K3$ surface is the {\em Fermat quartic}:
\[
\{ [x_0, x_1, x_2, x_3] \in \mathbb{CP}^3 \;\;|\;\;
x_0^4+x_1^4+x_2^4+x_3^4=0
\}.
\]

Therefore, the warped product of a $K3$ surface by any compact Riemannian manifold $N$ satisfies the hypothesis of  Theorem \ref{theo:integral-inequality}.

\end{example}
\medskip

Recall that the {\em holonomy group} of a Riemannian manifold at a point $p\in M$ is the group of linear transformations of the tangent space $T_pM$ obtained by parallel transporting tangent vectors along all piecewise smooth loops based at $p$, that is:
\[
\mathrm{Hol}_p(M,g)
=\left\{ P_\gamma : T_pM \to T_pM \ \middle|\ 
\gamma \text{ is a loop based at } p \right\},
\]
where $P_\gamma$ is the parallel transport map along $\gamma$. Geometrically, the holonomy group measures how tangent vectors rotate when along closed curves via the parallel transport.
\smallskip

\begin{example}
All the possible holonomy groups of simply connected manifolds are classified, due to  M. Berger (see for instance \cite[Theorem 3.4.1]{Joyce}). It is well-known that compact Riemannian manifolds whose holonomy group is contained in $ SU(n)$, $Sp(n)$, $G_2$ and $Spin(7)$ are Ricci-flat. See Propositions 6.1.1, 7.1.2, 10.1.5, and  10.5.5 from \cite{Joyce}, respectively. Therefore all of them can be taken as a compact manifold $N$, in order to produce warped products for Theorem \ref{theo:integral-inequality}. Some examples are the following.
\smallskip

D. Joyce constructed examples of compact $7$-dimensional Riemannian manifolds that are simply-connected and torsion-free with holonomy contained in $G_2$, by taking the flat 7-torus $T^7$, and taking the quotient by a finite group. Joyce also provide examples of 68 distinct, compact $7$-manifolds with holonomy $G_2$; and 95 distinct compact $8$-examples with holonomy Spin$(7)$. See \cite{Joyce} and the references therein. 
\smallskip

On the other hand, A. Kovalev obtained many compact Ricci-flat manifolds from a twisted connected sum construction of compact manifolds with holonomy in $G_2$ by two different methods. One method uses resolutions of singularities of appropriately chosen 7-dimensional orbifolds. Another method uses the gluing of two asymptotically cylindrical pieces and requires a certain matching condition on an embedded $K3$ surface.
\end{example}
\bigskip

%

%
%
%
%

Now we will discuss some properties that follow from considering certain intersections of a warped product with a submanifold in the same underlying ambient manifold. 
\newline

Consider the warped product
\[
M^n=\mathbb{R}\times_f P^{n-1},
\]
where $P$ is a Riemannian manifold and the metric is $dt^2+f(t)^2g_P$. 
For each $t\in\mathbb{R}$, the {\em slice} $\Sigma_t=\{t\}\times P$ is a totally umbilical hypersurface with constant mean curvature $\mathcal{H}(t)=\frac{f'(t)}{f(t)}$.
\begin{definition}
Let $\Sigma$ be a hypersurface in $M^n=\mathbb{R}\times_fP^{n-1}$. 
The \textit{height function} $h \colon \Sigma \to \mathbb{R}$ of  $\Sigma$ is defined by $h(p)=\pi_\mathbb{R}(p)$, $p\in \Sigma$, where $\pi_\mathbb{R} \colon M \to \mathbb{R}$ is the projection onto the first factor.    
\end{definition}
\medskip

\begin{definition}
    Let $M$ and $N$ be submanifolds of a Riemannian manifold $\widebar{M}$. We say that $M$ and $N$ are {\em normal submanifolds} in $\widebar{M}$ if
    \begin{enumerate}
        \item[\bfseries(a)] $M\cap N$ is a submanifold of $\widebar{M}$.
        \item[\bfseries(b)] $T_pM\cap T_p^\perp(M\cap N)\subset T_p^\perp N$ for all $p\in M\cap N$.
    \end{enumerate}    
\end{definition}
\medskip

\begin{remark}
\label{remark:normal}
    Condition (b) allows us to interchange the roles of $M$ and $N$, meaning that (b) is equivalent to $$T_pN\cap T_p^\perp(M\cap N)\subset T_p^\perp M$$ for all $p\in M\cap N$. In addition, it is not difficult to verify that if $M$ and $N$ are hypersurfaces, where $\eta$ and $\xi$ are the respective unit normal vector fields, then normality is equivalent to have $\phi=\measuredangle(\eta,\xi)=\frac{\pi}{2}$ along $M\cap N$. In this case $M$ and $N$ intersect transversally, thus $M\cap N$ is a submanifold of $\widebar{M}$ of codimension $2$.
\end{remark}


{See Section 4 in \cite{melendez-rodriguez} for more details on normal submanifolds.}
\newline

Let $M=\mathbb{R}\times_f P^{n-1}$. We now consider an isometric immersion $F: M \to \widebar{M}^{n+1}$ 
into a Riemannian manifold $\widebar{M}^{n+1}$. If   $N$ is a totally geodesic hypersurface of $\widebar{M}$, the next result gives us sufficient conditions for $M\cap N$ to be a totally geodesic slice of $M$, provided that $M$ and $N$ are normal hypersurfaces in $\widebar{M}$.


\begin{theorem} \label{theo:normality}
    Let $\widebar{M}^{n+1}$ be a Riemannian manifold, $N$ a totally geodesic hypersurface of $\widebar{M}$ and $M^n=\mathbb{R}\times_fP^{n-1}$ a warped product hypersurface in $\widebar{M}$ with $\mathcal{H}'(t)\geq0$, where $\mathcal{H}(t)=f'(t)/f(t)$. Suppose that $M$ and $N$ are normal hypersurfaces such that $\Sigma^{n-1}=M\cap N$ is a complete parabolic submanifold of $\widebar{M}$ with Ricci curvature bounded from below and bounded height function $h \colon \Sigma \to \mathbb{R}$. Then $\Sigma$ is a totally geodesic slice of $M$.
\end{theorem}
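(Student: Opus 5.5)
The plan has three parts: show that $\Sigma$ is totally geodesic inside $M$; compute $\Delta_\Sigma h$ through the Hessian of $t$ on the warped product; then combine the Omori--Yau maximum principle, the monotonicity of $\mathcal{H}$ and parabolicity to force $h$ to be constant at a level where $\mathcal{H}$ vanishes.

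\emph{Step 1: $\Sigma$ is totally geodesic in $M$.} Let $\eta$ and $\xi$ be the unit normals of $M$ and $N$ in $\widebar{M}$. By Remark~\ref{remark:normal}, normality means $\metric{\eta}{\xi}=0$ along $\Sigma$. Hence $\xi|_\Sigma$ is tangent to $M$ and is the unit normal of $\Sigma$ inside $M$; symmetrically, $\eta$ is the unit normal of $\Sigma$ inside $N$. For $X,Y$ tangent to $\Sigma$, the Gauss formula for $M\subset\widebar{M}$ gives $\widebar{\nabla}_XY=\nabla^M_XY+\metric{A_\eta X}{Y}\eta$. Since $\eta\perp\xi$, this yields
\[
\metric{\nabla^M_XY}{\xi}=\metric{\widebar{\nabla}_XY}{\xi}=0,
\]
where the last equality holds because $N$ is totally geodesic. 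So the second fundamental form of $\Sigma$ in $M$ vanishes. In particular the mean curvature term drops out of the Laplacian formula below.

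\emph{Step 2: the Laplacian of the height function.} On $M=\mathbb{R}\times_fP$ the function $t$ satisfies $\nabla t=\partial_t$ and
\[
\Hess_M t=\mathcal{H}(t)\,\bigl(g-dt\otimes dt\bigr).
\]
Since $\Sigma$ is totally geodesic in $M$, tracing over an orthonormal frame of $T\Sigma$ gives
\[
\Delta_\Sigma h=\mathcal{H}(h)\bigl((n-1)-|\nabla^\Sigma h|^2\bigr),
\]
where $\nabla^\Sigma h$ is the tangential part of $\partial_t$, so $|\nabla^\Sigma h|\le 1$.

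\emph{Step 3: maximum principle and monotonicity.} Here I use that $\Sigma$ is complete with Ricci curvature bounded below and that $h$ is bounded. By the Omori--Yau maximum principle applied to $h$, there is a sequence $p_k\in\Sigma$ with
\[
h(p_k)\to h^*=\sup h,\qquad |\nabla h(p_k)|\to0,\qquad \Delta h(p_k)<1/k .
\]
Passing to the limit gives $(n-1)\mathcal{H}(h^*)\le 0$. Applying the same principle to $-h$ gives $\mathcal{H}(h_*)\ge0$ at $h_*=\inf h$. Since $\mathcal{H}'\ge0$ and $h_*\le h^*$, we get
\[
0\le\mathcal{H}(h_*)\le\mathcal{H}(h^*)\le0 .
\]
By monotonicity $\mathcal{H}\equiv0$ on $[h_*,h^*]$, hence $\Delta_\Sigma h\equiv0$. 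A bounded harmonic function is in particular bounded above and subharmonic, so parabolicity of $\Sigma$ forces $h\equiv t_0$. Then $\Sigma$ is contained in, and being a connected complete hypersurface of $M$ of the same dimension, coincides with a connected component of, the slice $\Sigma_{t_0}$. Its mean curvature there is $\mathcal{H}(t_0)=f'(t_0)/f(t_0)=0$, and slices are totally umbilical, so $\Sigma_{t_0}$ is totally geodesic in $M$.

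\emph{Main obstacle.} The delicate point is Step 3, passing from the two Omori--Yau inequalities to the vanishing of $\mathcal{H}$ on the whole range of $h$. This is exactly where $\mathcal{H}'\ge0$ is needed, and one has to check carefully that the limits $\mathcal{H}(h(p_k))\to\mathcal{H}(h^*)$ make sense, using continuity of $\mathcal{H}$ and the boundedness of $h$. A secondary check is the normality bookkeeping in Step 1, namely that $\xi$ really is the normal of $\Sigma$ in $M$, which relies on the transversality in Remark~\ref{remark:normal}.
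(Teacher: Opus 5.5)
Your proof is correct and follows essentially the same route as the paper. Normality and total geodesicity of $N$ remove the mean-curvature term, which gives $\Delta_\Sigma h=\mathcal{H}(h)\bigl((n-1)-|\nabla^\Sigma h|^2\bigr)$; then come Omori--Yau, the monotonicity $\mathcal{H}'\ge 0$, and parabolicity. The differences are minor: you show directly that $\Sigma$ is totally geodesic in $M$, whereas the paper only extracts $\mathbf{H}_\Sigma^M=0$ from its Corollary. Your Omori--Yau step at $\sup h$ is also unnecessary, since $\mathcal{H}(h)\ge\mathcal{H}(h_*)\ge 0$ already gives $\Delta_\Sigma h\ge 0$, and parabolicity with $\sup_\Sigma h<\infty$ then finishes.
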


\begin{remark}
The condition $\mathcal{H}'(t)\geq 0$ is equivalent to the inequality:
\[
ff''-(f')^2\geq 0.
\]
This is not possible if
the Ricci curvature is positive in the direction $\partial_t$, since $
\Ric_M\left(\partial_t, \partial_t\right)=-\ \frac{f''}{f}\partial_t$.
\end{remark}
\medskip

The proof of Theorem \ref{theo:normality} is an application of the Omori–Yau maximum principle to $h$. The next corollary is a more practical version of the above theorem.

\begin{corollary} \label{coro:normality}
    Let $\widebar{M}^{n+1}$ be a Riemannian manifold, $N$ a totally geodesic hypersurface of $\widebar{M}$ and $M^n=\mathbb{R}\times_fP^{n-1}$ a warped product hypersurface in $\widebar{M}$ with $\mathcal{H}'(t)\geq0$. Suppose that $M$ and $N$ are normal hypersurfaces such that $\Sigma^{n-1}=M\cap N$ is compact. Then $\Sigma$ is a totally geodesic slice of $M$ and $P$ is compact.
\end{corollary}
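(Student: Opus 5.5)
The plan is to derive Corollary~\ref{coro:normality} directly from Theorem~\ref{theo:normality} by checking that compactness of $\Sigma=M\cap N$ supplies every analytic hypothesis there. By Remark~\ref{remark:normal}, normality of the hypersurfaces $M$ and $N$ means they meet at angle $\pi/2$ along $\Sigma$. Hence they intersect transversally, so $\Sigma$ is a codimension-two submanifold of $\widebar{M}$, that is, an $(n-1)$-dimensional submanifold of $M$.

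The hypotheses of Theorem~\ref{theo:normality} then follow from compactness as follows.
\begin{enumerate}
\item[(i)] A compact Riemannian manifold (without boundary, as $\Sigma$ is the transversal intersection of two hypersurfaces) is complete.
\item[(ii)] It is parabolic: every bounded-above subharmonic function attains its maximum, so it is constant by the strong maximum principle. Equivalently, the Omori--Yau principle holds trivially at a point of maximum.
\item[(iii)] The Ricci curvature is continuous on a compact set, so it is bounded from below.
\item[(iv)] The height function $h=\pi_{\mathbb{R}}|_{\Sigma}$ is continuous, so it is bounded.
\end{enumerate}
Theorem~\ref{theo:normality} therefore applies and gives that $\Sigma$ is a totally geodesic slice of $M$, i.e.\ $\Sigma=\{t_0\}\times P$ for some $t_0$, with $\mathcal{H}(t_0)=0$.

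For the remaining claim, the step I expect to need the most care is justifying that $\Sigma$ equals the \emph{whole} slice, not merely an open subset of it. The theorem's conclusion "$\Sigma$ is a slice" already says $\Sigma=\{t_0\}\times P$. If one only knows $h\equiv t_0$, then $\Sigma\subset\{t_0\}\times P$ is an $(n-1)$-dimensional submanifold of the $(n-1)$-dimensional connected slice. It is therefore open in the slice, and it is closed in it because it is compact. Hence $\Sigma=\{t_0\}\times P$. Finally, $\{t_0\}\times P$ is diffeomorphic to $P$ via $p\mapsto(t_0,p)$, with metric $f(t_0)^2g_P$. So $P$ is compact as the continuous image of the compact set $\Sigma$ under the projection $\pi_P$.
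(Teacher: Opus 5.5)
Your proposal is correct and follows the paper's intended route. The paper gives no separate proof: it presents the corollary as the special case of Theorem~\ref{theo:normality} in which compactness of $\Sigma$ supplies completeness, parabolicity, a lower Ricci bound and a bounded height function, so that $\Sigma=\{t_0\}\times P$, which in turn forces $P$ to be compact. Your open--closed argument, showing that $\Sigma$ fills the whole slice, is a useful detail the paper omits; like the paper, it tacitly assumes that $\Sigma$ and $P$ are connected.
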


\begin{example} 
    Consider a rotation hypersurface $M^n$ in the Euclidean space $\mathbb{R}^{n+1}$. We denote by $(x_1,\dots,x_{n+1})$ the coordinates in $\mathbb{R}^{n+1}$, and  parametrize $M$ by
    \begin{gather*}
        \varphi(t,s_1,\dots,s_{n-1})=\bigl(t,f(t)\Phi(s_1,\dots,s_{n-1})\bigr),
    \end{gather*}
    where $(t,f(t))$ is the profile curve of $M$, with $f(t)>0$ for all $t$, and $\Phi$ is a parametrization of the unit sphere $\mathbb{S}^{n-1}$.

    Observe that $M$ has the warped product metric
\begin{gather*}
\metric{\,}{}_{M}=dt^2+f(t)^2 g_{\mathbb{S}^{n-1}}
\end{gather*}
where $g_{\mathbb{S}^{n-1}}$ denotes the standard round metric of the sphere $\mathbb{S}^{n-1}$.

    If we assume that $\mathcal{H}'(t)\geq0$, then, as a direct application of Corollary \ref{coro:normality}, we see that the parallels
    \begin{gather*}
        \Sigma^{n-1}=M\cap N=\{t_0\}\times f(t_0)\,\mathbb{S}^{n-1}
    \end{gather*}
    are totally geodesic in $M$ if the intersection of $M$ with the horizontal hyperplane
    \begin{gather*}
        N=\bigl\{(x_1,\dots,x_{n+1})\in\mathbb{R}^{n+1} : x_1=t_0\bigr\}
    \end{gather*}
    is normal, which only happens when $f'(t_0)=0$.

%
%
\end{example}

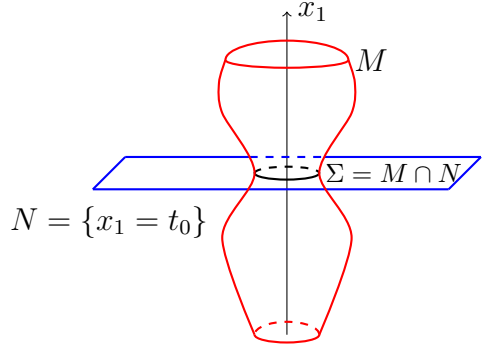
\begin{figure}[h]
        \centering
        \begin{tikzpicture}[scale=0.43]

            \draw[->] (0,-4 ) --  (0,6) node[right]{$x_1$};

            \draw[black,thick] (-1,1) arc (180:360:1 and .2);
            \draw[black,thick,dashed] (-1,1) arc (180:0:1 and .2);

         \draw[-,thick, blue] 
         (6,1.5) -- (1.15,1.5) node[right]{};
         \draw[-,thick, blue] 
         (-5,1.5) -- (-1.15,1.5) node[right]{};
        \draw[-,thick, blue] 
         (-6,0.5) -- (5,0.5) node[right]{};
         \draw[-,dashed,thick, blue] 
         (-1.15,1.5) -- (1.15,1.5) node[right]{};
         \draw[-,thick, blue] 
         (6,1.5) -- (5,0.5)
         node[right]{};
         \draw[-,thick, blue] 
         (-6,0.5) -- (-5,1.5)
         node[right]{};
        
          \node at (2.6,4.5) {$M$};
          \node at (3.3,1) {{\footnotesize $\Sigma=M\cap N$}};
           \node at (-5.5,-0.5) {$N=\{x_1=t_0\}$};
          
        
            \draw [red,thick] plot[smooth] coordinates {(-1.9,4.5) (-2.1,3.8) (-2,2.86) (-1,1) (-2,-1) (-1,-4)};
            
            \draw [red,thick] plot [smooth] coordinates {(1.9,4.5) (2.1,3.8) (2,2.86) (1,1) (2,-1) (1,-4)};
        
            \draw[dashed,red,thick] (1,-4) arc (0:180:1 and 0.4);
            \draw[red,thick] (-1,-4) arc (180:360:1 and .24);
        
        
            \draw[red,thick] (1.9,4.5) arc (0:180:1.9 and .6);
            \draw[red,thick] (-1.9,4.5) arc (180:360:1.9 and .24);
        
        
        
        \end{tikzpicture}
        \caption{A rotation hypersurface $M$ normal to  the parallel plane $x_1=t_0$}
        \label{fig:lady}
\end{figure}

Let $(M^{n+1},g)$ be a warped product of the form $M=I\times_f P$, where $P$ is a compact Riemannian manifold. In the following results, given a smooth embedded hypersurface $N\subset M$, we study the geometry of its intersection with the slices $\{t\}\times P$. Under suitable assumptions ensuring that these intersections are smooth and bound compact domains in $N$, we derive estimates relating the eigenvalues of the Laplacian on $P$ with geometric quantities of the hypersurface and the warping structure. The proofs rely on harmonic extensions and an application of Reilly's formula on the induced domains in $N$. 
\newline

\begin{theorem} \label{theo:firsteigen}
Let $(\widebar{M}^{n+1},g)$ be a Riemannian manifold and let 
\[
M = I \times_{f} P,\qquad g_M = dt^2 + f(t)^2 g_P,
\]
a warped product with $P$ a codimension $2$ compact submanifold of $\widebar{M}$. Let $N\subseteq \widebar{M}$ be a submanifold such that $\Sigma:=N\cap M\neq \emptyset$ is a $(n-1)$ submanifold of $M$. Then the first eigenvalue of the Laplacian of $N$ satisfies:
 \begin{align}
 \label{FirstLap1}
     \lambda_1(N)\leq \frac{2\int_{\Sigma}\norm{A_P(T)}^2+2\int_{\Sigma}\mathcal{H}^2(t)\cos^2(\theta)\sin^2(\theta)}{\int_{\Sigma}\cos^2(\theta)}
\end{align}    
where $\mathcal{H}(t)=f'(t)/f(t)$, $T$ is the tangential projection of $\partial_t$ over the tangent space of $P$, $A_P$ is the shape operator of $P$, and $\theta$ is the angle between $N$ and the normal slice.
\end{theorem}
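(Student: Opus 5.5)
The plan is to treat $\Sigma=N\cap M$ as the boundary of a compact domain $\Omega\subset N$, as in the setup described before the statement, and to estimate $\lambda_1(N)$ by a Rayleigh quotient. The test function comes from the warped structure: the height $t$, or rather a first-order quantity attached to it. Since the right-hand side of \eqref{FirstLap1} is made of boundary integrals, I will push everything to $\Sigma$ using Reilly's formula on $\Omega$.

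\textbf{Step 1: geometry of $\partial_t$ along $\Sigma$.} At points of $\Sigma$, decompose
\[
\partial_t=T+\cos(\theta)\,\nu+\sin(\theta)\,\xi ,
\]
where $\nu$ is the unit conormal of $\Sigma$ in $N$, $\xi$ is normal to $N$, and $\theta$ is the angle between $N$ and the normal slice. Then I differentiate $\partial_t$. In $M$ one has $\nabla_X\partial_t=\mathcal{H}(t)\bigl(X-\metric{X}{\partial_t}\partial_t\bigr)$. In $\widebar{M}$ there is an extra term from the second fundamental form of $M$, which is controlled through $A_P(T)$. Taking tangential components along $\Sigma$ then gives
\[
|\nabla^\Sigma\cos\theta|^2\le 2\norm{A_P(T)}^2+2\mathcal{H}^2\cos^2\theta\sin^2\theta ,
\]
using $(a+b)^2\le 2a^2+2b^2$. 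The factor $\cos\theta\sin\theta$ arises from differentiating the component $\metric{\partial_t}{\nu}$ against the $\mathcal{H}(t)\,\metric{X}{\partial_t}\partial_t$ term.

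\textbf{Step 2: test function.} Let $u=\cos\theta$ on $\Sigma$, and let $\hat u$ be its harmonic extension to $\Omega$. I apply Reilly's formula to $\hat u$, or simply the variational characterization, which yields
\[
\lambda_1\int_\Sigma u^2\le\int_\Sigma|\nabla^\Sigma u|^2 .
\]
This is a Steklov/Dirichlet-type comparison; the boundary-term curvature contributions in Reilly's formula are discarded, assuming sign hypotheses implicit in the setting. Inserting Step 1 then gives \eqref{FirstLap1} directly. The denominator $\int_\Sigma\cos^2\theta$ is exactly $\int_\Sigma u^2$.

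\textbf{Main obstacle.} The hard step is justifying the inequality between the eigenvalue and the boundary Rayleigh quotient. Reilly's formula for a harmonic $\hat u$ gives
\[
0=\int_\Omega\bigl(|\Hess\hat u|^2+\Ric(\nabla\hat u,\nabla\hat u)\bigr)+\int_\Sigma\Bigl(2\,\partial_\nu\hat u\,\Delta_\Sigma u+H(\partial_\nu\hat u)^2+\mathrm{II}(\nabla u,\nabla u)\Bigr),
\]
and one must discard the curvature terms carefully. I would first try orthogonality to constants, which requires subtracting the mean of $u$. If that fails, I would interpret $\lambda_1(N)$ as the first Dirichlet eigenvalue on $\Omega$ and instead use $\hat u$ cut off near $\Sigma$ with Step 1's pointwise bound.
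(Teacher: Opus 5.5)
The argument breaks at Step 2. The quotient $\int_\Sigma|\nabla^\Sigma u|^2/\int_\Sigma u^2$ is a Rayleigh quotient for the Laplacian of $\Sigma$. At best it bounds $\lambda_1(\Sigma)$, and only when $\int_\Sigma u=0$; it has no a priori relation to $\lambda_1(N)$. Reilly's formula cannot supply one. Applied to a \emph{harmonic} extension $\hat u$, it contains no eigenvalue of $N$ at all. Its Ricci, mean-curvature, second-fundamental-form and cross terms have no sign without curvature and convexity hypotheses, and the statement makes none. Nor does the statement assume that $\Sigma$ bounds a compact $\Omega\subset N$; that hypothesis belongs to the next theorem.

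Worse, the inequality you are aiming for fails. Take $\widebar{M}=S^{n+1}\subset\mathbb{R}^{n+2}$ and $N=S^{n+1}\cap\{x_2=0\}$. Let $M$ be a piece $I\times S^{n-1}$ of the tube $S^1(\cos r)\times S^{n-1}(\sin r)$ around the great circle in the $x_1x_2$-plane, with $t$ the arclength along the circle factor. Then $f$ is constant and $\mathcal{H}=0$, and $\Sigma=M\cap N$ is a slice with $\xi=\pm\partial_t$ along it. Hence $\cos^2\theta\equiv1$, $T=0$, and the right-hand side of \eqref{FirstLap1} is $0$, while the first nonzero eigenvalue of $N=S^n$ is $n$. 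Subtracting the mean of $u$ does not help: it changes the denominator, and here it kills the test function. The Dirichlet fallback also fails, because $\hat u=u\neq0$ on $\partial\Omega$, so a cut-off of width $\epsilon$ costs a term of order $\epsilon^{-1}\int_\Sigma u^2$.

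Step 1 also needs repair. If $T$ is tangent to $\Sigma$ and $\nu,\xi$ are orthonormal normals, then $\partial_t=T+\cos\theta\,\nu+\sin\theta\,\xi$ forces $\norm{T}^2=1-\cos^2\theta-\sin^2\theta=0$. The paper instead sets $\cos\theta=\metric{\xi}{\partial_t}$ with $\xi$ the unit normal of $N$, and $T=\partial_t-\cos\theta\,\xi$, so $\norm{T}=\sin\theta$. The $A_P$ term then comes from differentiating $\xi$, and the $\mathcal{H}$ term from $\nabla_X\partial_t=\mathcal{H}(t)X_P$. This gives $\nabla^N\cos\theta=-A_P(T)-\mathcal{H}(t)\cos\theta\,T$, and the integrand of \eqref{FirstLap1} follows from $(a+b)^2\le 2a^2+2b^2$.

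The paper's proof uses no harmonic extension or Reilly formula. It inserts $\cos\theta$, ``extended by zero outside $\Sigma$'', directly into $\lambda_1(N)\le\int_N\norm{\nabla\varphi}^2/\int_N\varphi^2$ and reads the integrals over $N$ as integrals over $\Sigma$. That is precisely the step you identified as the main obstacle, and it is not justified there either. The extension is not smooth, $\Sigma$ is a null set in $N$ when $N$ is a hypersurface, and the Rayleigh bound is used without orthogonality to constants, which is consistent with the example above. So your diagnosis of where the difficulty lies is right. However, neither route proves \eqref{FirstLap1} except in the trivial reading $\lambda_1(N)=0$ for closed $N$.
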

\medskip

\begin{example}
Let $(\widebar{M}^{n+1}, g)$ be a compact cohomogeneity one manifold, under the action of a Lie group such that $\widebar{M}/G$ is a closed interval $I$ for which the metric in the regular part decompose as a warped product $M = I \times_f P$, where the principal orbit of the action. The vector field $\partial_t$ is the unit vector field orthogonal to the orbits. The mean curvature of the orbits is precisely $\mathcal H(t)=\frac{f'(t)}{f(t)}$. Therefore we may take the principal orbit as the submanifold $N$ and apply the Theorem \ref{theo:firsteigen}.
\newline

Besides the standard examples such as the sphere or the euclidean space, we may consider the manifold $\widebar{M}=\mathbb{CP}^2\#\overline{\mathbb{CP}^2}$. It is a cohomogeneity one manifold by means of the unitary group U$(2)$. There are at least two $U(2)$-invariant metrics, for instance, the Page metric \cite{PAGE1978} or the Koiso-Cao soliton \cite{Cao1}. The principal orbits are diffeomorphic to the sphere $S^3$ and the singular orbits are both diffeomorphic to $S^2$. In this case, we may take $N$ as the principal orbit, $N=S^3$. Therefore, with the repective metrics, the first eigenvalue of the Laplacian satisfies \eqref{FirstLap1}.
\end{example}
\medskip

\begin{theorem}
\label{theo: Reilly}
Let $(\widebar{M}^{n+1},g)$ be a Riemannian manifold and let 
\[
M^n = I \times_{f} P,\qquad g_M = dt^2 + f(t)^2 g_P,
\]
a warped product submanifold of $\widebar{M}$ with $P$ compact. Assume that $\mathcal{H}(t)>0$, where $\mathcal{H}(t)=f'(t)/f(t)$. Let $N\subseteq \widebar{M}$ be a submanifold such that $\Sigma:=N\cap M\neq \emptyset$ is a $(n-1)$ submanifold of $M$.  For a fixed regular value $t\in I$ of $f$ set:
\[
K_t := N \cap (\{t\}\times P),
\]
and assume that $K_t$ is a smooth, closed, connected submanifold which bounds a compact domain 
$\Omega_t\subset N$, and that the projection 
$\pi : \{t\}\times P \to P$ restricts to a diffeomorphism 
$\pi|_{K_t} : K_t \to P$ with the induced metric on $K_t$ equal to 
$f^2 g_P$.
\newline

Then the following spectral estimate holds for eigenvalues $\lambda$ of the Laplacian of $P$:

\[
\lambda^2
\ge
\frac{
\displaystyle
\int_{\Omega_t}\Ric_N(\nabla_N u, \nabla_Nº u)\,dV
+
\int_{K_t}\mathrm{II}_t(\nabla_t f_0,\nabla_t f_0)\,dA
}{
\displaystyle
\int_{K_t}\frac{f_0^2}{f^4 H_t}\,dA
}
\]
where $f_0$ is an eigenfunction of $\Delta_{K_t}$ with eigenvalue $\frac{\lambda}{f}$ and $\Ric_N$ is the  Ricci curvature of $N$.
\end{theorem}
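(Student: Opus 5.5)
The plan is to apply Reilly's formula on the compact domain $\Omega_t\subset N$ with boundary $K_t$ to the harmonic extension of the eigenfunction. Take $f_0$ on $K_t$ with $\Delta_{K_t} f_0 = -\frac{\lambda}{f} f_0$; this is the pullback under $\pi|_{K_t}$ of an eigenfunction of $\Delta_P$. The rescaling comes from the assumption that the induced metric on $K_t$ is $f^2 g_P$, with the paper's normalization of the eigenvalue. Let $u$ solve the Dirichlet problem $\Delta_N u=0$ in $\Omega_t$ and $u|_{K_t}=f_0$.

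Reilly's formula on $\Omega_t$ reads
\[
\int_{\Omega_t}\big((\Delta u)^2-|\nabla^2u|^2-\Ric_N(\nabla u,\nabla u)\big)
=\int_{K_t}\big(2 u_\nu\Delta_{K_t} f_0 + H_t u_\nu^2+\mathrm{II}_t(\nabla_t f_0,\nabla_t f_0)\big),
\]
where $\nu$ is the outward unit normal of $K_t$ in $N$ and $H_t,\mathrm{II}_t$ are the mean curvature and second fundamental form of $K_t\subset\Omega_t$. Since $\Delta u=0$, the left side is at most $-\int_{\Omega_t}\Ric_N(\nabla u,\nabla u)$. This yields
\[
\int_{\Omega_t}\Ric_N(\nabla u,\nabla u)+\int_{K_t}\mathrm{II}_t(\nabla_t f_0,\nabla_t f_0)\le -\int_{K_t}\Big(H_t u_\nu^2-\frac{2\lambda}{f}f_0u_\nu\Big).
\]

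Next I would bound the boundary term pointwise. The hypothesis $\mathcal H(t)>0$ is meant to guarantee that the relevant mean curvature $H_t$ of $K_t$ is positive. I would try to show this by identifying $H_t$ with $\mathcal H(t)$ times an angle factor, using that $K_t$ lies in the slice $\{t\}\times P$ with the induced metric $f^2g_P$. Given $H_t>0$, completing the square gives
\[
-H_t u_\nu^2+\frac{2\lambda}{f}f_0u_\nu\le \frac{\lambda^2 f_0^2}{f^2H_t}.
\]
Integrating produces $\lambda^2\int_{K_t} f_0^2/(f^2H_t)$. To match the denominator $f^4H_t$ in the statement, I would track the $f$-scaling: $\nabla_t$ and $dA$ on $K_t$ are taken with respect to $f^2g_P$, and $\lambda/f$ versus $\lambda/f^2$ depends on the eigenvalue convention. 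I would fix that convention so that the extra factor $f^{-2}$ appears, and then divide to obtain the stated inequality for $\lambda^2$.

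The main obstacle is the sign of $H_t$. Positivity of $\mathcal H(t)$ in $M$ does not automatically give positivity of the mean curvature of $K_t$ inside $N$, since this depends on how $N$ meets the slice. I would first try to compute $H_t$ via the Gauss formula, relating the normal $\nu$ to $\partial_t$ through the angle between $N$ and the slice, as in Theorem \ref{theo:firsteigen}. If that fails, I would simply add the hypothesis $H_t>0$, which the statement implicitly uses since it divides by $H_t$. A secondary subtlety is that $\mathrm{II}_t(\nabla_t f_0,\nabla_t f_0)$ enters with the sign given in Reilly's formula, so the orientation of $\nu$ must be chosen consistently with that formula.
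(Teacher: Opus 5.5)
Your argument is the same as the paper's. You transplant the eigenfunction of $P$ to $K_t$ and take its harmonic extension $u$ in $\Omega_t$. Then you apply Reilly's formula with $\Delta_N u=0$, discard $|\nabla_N^2u|^2$, complete the square in $u_\nu$ using $H_t>0$, and rearrange. Your handling of the inequality directions is cleaner than the paper's: its penultimate display flips the sign of $\int_{\Omega_t}\Ric_N$, although its final rearrangement agrees with yours. The sign of the $u_\nu\Delta u$ term in Reilly's formula ($+2$ for you, $-2$ in the paper) does not matter, because only its square survives completing the square.

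Both of your open points can be settled.

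\textbf{The power of $f$.} This is not a matter of convention. The induced metric on $K_t$ is $f(t)^2g_P$, and $f(t)$ is constant on $K_t$. Hence $\Delta_{K_t}=f^{-2}\Delta_P$, so $-\Delta_{K_t}f_0=\frac{\lambda}{f^2}f_0$.
\begin{itemize}
\item This is what the paper's proof uses; the ``$\lambda/f$'' in the statement is a misprint.
\item Your sentence asserting that the pullback has eigenvalue $\lambda/f$ is therefore wrong.
\item With $\lambda/f^2$, your completed square gives exactly $\lambda^2f_0^2/(f^4H_t)$. Taking $\lambda/f$ literally would only give the denominator $f^2H_t$, which does not imply the stated bound.
\end{itemize}

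\textbf{The sign of $H_t$.} The paper simply writes ``since $H_t>0$'' and never derives it from $\mathcal{H}(t)>0$. Your fallback of assuming it is therefore exactly what the paper does, and your suspicion is justified. Take $\mathrm{II}_t(X,Y)=\metric{\nabla^N_X\nu}{Y}$, let $\eta$ be a unit normal of $M$ in $\widebar{M}$ with shape operator $A_\eta$, and write $\nu=\metric{\nu}{\partial_t}\partial_t+\metric{\nu}{\eta}\eta$ along $K_t$. Then
\[
H_t=(n-1)\,\mathcal{H}(t)\metric{\nu}{\partial_t}-\metric{\nu}{\eta}\tr_{K_t}A_\eta .
\]
For example:
\begin{itemize}
\item $\nu=\pm\eta$ gives $H_t=\mp\tr_{K_t}A_\eta$, which is unrelated to $\mathcal{H}(t)$.
\item $\nu=-\partial_t$ ($N$ tangent to $M$ along $K_t$, with $\Omega_t$ on the side of increasing $t$) gives $H_t=-(n-1)\mathcal{H}(t)<0$.
\end{itemize}
So $H_t>0$ must be imposed as an additional hypothesis, in your proof and in the paper's alike. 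With that hypothesis and the corrected eigenvalue $\lambda/f^2$, your proof is complete.
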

\medskip

\begin{example}
Consider the following Riemannian manifold:
\[
\widebar{M}^{n+1} = (r_0,\infty)\times \mathbb{S}^{n}, \qquad 
g = \frac{dr^2}{1 - \frac{2m}{r^{n-1}}} + r^2 g_{\mathbb{S}^{n}},
\qquad m>0,
\]
which is a Schwarzschild-type metric. Introducing a new radial parameter $t$ defined by $
dt = \frac{dr}{\sqrt{1 - \frac{2m}{r^{n-1}}}}, $
we may take the warped product :
\[
M = I \times_f \mathbb{S}^{n-1}, \qquad g = dt^2 + f(t)^2 g_{\mathbb{S}^{n-1}}, \quad f(t)=r(t).
\]

Since ${
f'(t) = \sqrt{1 - \frac{2m}{f(t)^{n-1}}} > 0,}$ it follows that $
\mathcal{H}(t) = \frac{f'(t)}{f(t)} > 0.$

The isometric action of $SO(n+1)$ on $\mathbb{S}^{n}$ induces a cohomogeneity one structure on $\widebar{M}$, whose principal orbits are the hypersurfaces $\{t\}\times \mathbb{S}^{n-1}$.
\medskip

Let $N \subset \widebar{M}$ be the submanifold $N = (r_0,\infty)\times \mathbb{S}^{n-1}$, with the product metric, where $\mathbb{S}^{n-1} \subset \mathbb{S}^n$ is a totally geodesic equator. For each regular value $t\in I$, we have
\[
K_t = N \cap (\{t\}\times \mathbb{S}^{n-1}) = \{t\}\times \mathbb{S}^{n-1}.
\]
Then $K_t$ is a smooth, closed, connected hypersurface in $N$ which bounds a compact domain $\Omega_t \subset N$. The natural projection $\pi : K_t \to \mathbb{S}^{n-1}$
is a diffeomorphism, and the induced metric on $K_t$ is  $g_{K_t} = f(t)^2 g_{\mathbb{S}^{n-1}}$.
\newline

Therefore, all the hypotheses of Theorem \ref{theo: Reilly} are satisfied.
\end{example}

We organize the paper as follows. First we give some general preliminaries in Section \ref{sec:preliminaries}, where we address some properties of the intersection of submanifolds. Next we present the proofs of Theorems \ref{theo:integral-inequality}, \ref{theo:normality}, \ref{theo:firsteigen}  and \ref{theo: Reilly} in Sections \ref{sec:proof2} and \ref{sec:proof3}, respectively, where we give some notation and auxiliary results used in each proof. 

\section{Preliminaries on warped products and intersections} \label{sec:preliminaries}

Throughout the manuscript $\widebar{M}$ will always denote the ambient space, which is a Riemannian manifold with metric $\metric{\,}{}$ and Riemannian connection $\nabla$. $M$ and $N$ also will denote Riemannian manifolds with metrics $\metric{\,}{}_M$ and $\metric{\,}{}_N$, and Riemannian connections $\nabla^M$ and $\nabla^N$, respectively.

Let $f \colon M \to \mathbb{R}^+$ be a smooth function. The \textit{warped product} $\widebar{M}=M\times_fN$ is the product manifold $M\times N$ endowed with the warped metric
\begin{gather*}
    \metric{X}{Y}=\metric{d\pi_M(X)}{d\pi_M(Y)}_M+(f\circ\pi_M)^2\metric{d\pi_N(X)}{d\pi_N(Y)}_N,
\end{gather*}
where $\pi_M$ and $\pi_N$ are the projections of $\widebar{M}$ onto the corresponding factor. In a compact way we write $$\metric{\,}{}=\metric{\,}{}_M+f^2\metric{\,}{}_N.$$ The function $f$ is called the \textit{warping function} of $\widebar{M}$. Notice that if $f$ is constant, then $M\times_fN$ is the Riemannian product $M\times N$ where $N$ has the metric $f^2\metric{\,}{}_N$.

Let $f \colon \widebar{M} \to \mathbb{R}$ be a smooth function. We denote by $\grad\,f$ the \textit{gradient} of $f$, and by $\Hess\,f$ the Hessian of $f$, which are defined by
\begin{gather*}
    df(X)=\metric{\grad\,f}{X} \quad \text{and} \quad \Hess\,f(X,Y)=\metric{\nabla_X(\grad\,f)}{Y},
\end{gather*}
where $X,Y$ are vector fields in $\widebar{M}$. In addition, the \textit{Laplacian} of $f$ is considered with the sign convention $$\Delta f=\tr(\Hess\,f),$$ where $\tr$ denotes the trace of a linear operator.

We use a similar notation, but with a superscript, for the above differential operators when $f$ is a real-valued function defined in $M$ or $N$. For example, we write $\grad^Mf$, $\Hess^Mf$ and $\Delta^Mf$.

\begin{lemma} \label{lemma:oneill}
    Let $M$ and $N$ be Riemannian manifolds, with $n=\dim(N)>1$, and let $f \colon M \to \mathbb{R}^+$ be a smooth function. Consider the warped product $\widebar{M}=M\times_fN$. Let $X,Y$ be horizontal vector fields, and let $V$ be a vertical vector field. Then
    \begin{enumerate}
        \item[\bfseries(1)] $\Ric(X,Y)=\Ric^M(X,Y)-\medmath{\frac{n}{f}}\,\Hess^Mf(X,Y)$.
        \item[\bfseries(2)] $\nabla_XV=\nabla_VX=\medmath{\frac{X(f)}{f}}V$.
    \end{enumerate}
\end{lemma}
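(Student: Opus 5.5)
This is the standard O'Neill formulas for warped products; I will derive both parts from the Koszul formula, splitting vectors into horizontal (tangent to $M$) and vertical (tangent to $N$) pieces.

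\textbf{Part (2).} Take $X$ a lift of a field on $M$ and $V$ a lift of a field on $N$, so that $[X,V]=0$. This gives $\nabla_XV=\nabla_VX$ at once. Next pair $\nabla_XV$ with another lifted field via the Koszul formula.
\begin{itemize}
\item Against a vertical lift $W$: only the terms $X\metric{V}{W}=X(f^2\metric{V}{W}_N)=2fX(f)\metric{V}{W}_N$ survive. The other brackets vanish, and $V\metric{X}{W}=0$.
\item Hence $\metric{\nabla_XV}{W}=\frac{X(f)}{f}\metric{V}{W}$.
\item Against a horizontal lift $Y$: all terms vanish, because $\metric{V}{X}=\metric{V}{Y}=0$, $\metric{X}{Y}$ is constant along the fibers, and the brackets $[X,V]$, $[Y,V]$ are zero while $[X,Y]$ is horizontal.
\end{itemize}
Together these give $\nabla_XV=\frac{X(f)}{f}V$. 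Since both sides are tensorial in $V$, the identity extends to an arbitrary vertical field $V$.

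\textbf{Part (1).} First record two further facts.
\begin{itemize}
\item For horizontal lifts, $\nabla_XY$ is the lift of $\nabla^M_XY$. This follows from Koszul, since the vertical pairings vanish.
\item For vertical lifts, the horizontal part of $\nabla_VW$ is $-\frac{\metric{V}{W}}{f}\grad f$. This follows from the symmetry $\metric{\nabla_VW}{X}=-\metric{W}{\nabla_VX}$ combined with Part (2).
\end{itemize}
Then compute $\Ric(X,Y)=\sum_i \metric{R(E_i,X)Y}{E_i}+\sum_j \metric{R(V_j,X)Y}{V_j}/\metric{V_j}{V_j}$, with $\{E_i\}$ a horizontal orthonormal frame and $\{V_j\}$ a vertical orthonormal frame.
\begin{itemize}
\item \textbf{Horizontal sum.} Because $M\times\{q\}$ is totally geodesic by the first fact, this sum reproduces $\Ric^M(X,Y)$.
\item \textbf{Vertical sum.} For each $V$, expand $R(V,X)Y=\nabla_V\nabla_XY-\nabla_X\nabla_VY-\nabla_{[V,X]}Y$ using Part (2).
\item From Part (2), $\nabla_VY=\frac{Y(f)}{f}V$, so $\nabla_X\nabla_VY=X\!\left(\frac{Y(f)}{f}\right)V+\frac{Y(f)X(f)}{f^2}V$.
\item Similarly $\nabla_V\nabla_XY=\frac{(\nabla_XY)f}{f}V$.
\item The vertical component of $R(V,X)Y$ is therefore $-\frac{1}{f}\Hess^Mf(X,Y)\,V$. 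The $X(f)Y(f)/f^2$ terms cancel, because $X(Y(f))-(\nabla_XY)f=\Hess f(X,Y)$.
\item Pairing with $V$ and normalizing, each of the $n$ vertical directions contributes $-\frac1f\Hess^Mf(X,Y)$.
\end{itemize}
Summing both contributions gives (1).

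The only real care needed is sign conventions and the bookkeeping in the vertical sum; everything else is a routine Koszul computation. Alternatively, one may simply cite \cite{oneill}, Corollary 7.43 and Proposition 7.35.
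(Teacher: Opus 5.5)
The paper gives no proof of this lemma; it refers to Chapter 7 of O'Neill. Your Koszul computations for lifts are essentially O'Neill's argument: Proposition 7.35, then the curvature identity $R(V,X)Y=-\frac{1}{f}\Hess^Mf(X,Y)\,V$ (up to O'Neill's opposite sign convention for $R$), then the trace in Corollary 7.43. Part (1) checks out. The horizontal trace follows from the totally geodesic leaves, the vertical trace follows from Part (2) applied to lifts, and the result extends from lifts to arbitrary horizontal $X,Y$ because both sides are tensors. Stating that extension explicitly would complete it.

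There is one false step: the last sentence of Part (2). The map $V\mapsto\nabla_XV$ is a derivation, not $C^\infty(\widebar{M})$-linear, so tensoriality cannot carry the identity from lifts to arbitrary vertical fields. The conclusion itself is false. Take $\tilde V$ a lift from $N$ and $\phi$ a nonvanishing function with $X\phi\neq0$, for instance a nonconstant function pulled back from $M$, and set $V=\phi\tilde V$. Then $\nabla_XV=\bigl(\frac{X\phi}{\phi}+\frac{X(f)}{f}\bigr)V$, while $\nabla_VX=\frac{X(f)}{f}V$ still holds. So both $\nabla_XV=\frac{X(f)}{f}V$ and the symmetry $\nabla_XV=\nabla_VX$ fail; the symmetry needs $[X,V]=0$. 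Similarly, if $X=\psi\tilde X$ with $V\psi\neq0$, then $\nabla_VX$ picks up the horizontal term $V(\psi)\tilde X$.

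So (2) must be read as in O'Neill, with $X$ and $V$ both lifts. What tensoriality legitimately gives is:
\begin{itemize}
\item $\nabla_VX=\frac{X(f)}{f}V$ for $X$ a lift and $V$ any vertical field, since the expression is tensorial in the differentiating slot $V$;
\item $\nabla_XV=\frac{X(f)}{f}V$ for $V$ a lift and $X$ any horizontal field.
\end{itemize}
The first form is exactly what the paper uses later, as $\nabla_V\partial_t=\mathcal{H}(t)V$.

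Tensoriality is also needed in the vertical trace of Part (1). You call $\{V_j\}$ orthonormal but compute with $[V_j,X]=0$ and Part (2) for lifts. A genuinely orthonormal frame such as $\tilde V_j/f$ has $[\tilde V_j/f,X]=\frac{X(f)}{f^2}\tilde V_j\neq0$. Instead take the $V_j$ to be lifts of a $g_N$-orthonormal frame, which are orthogonal with $\metric{V_j}{V_j}=f^2$. This is legitimate because each summand $\metric{R(V_j,X)Y}{V_j}/\metric{V_j}{V_j}$ is tensorial in $V_j$. Your normalization suggests this was the intent, but it should be stated.
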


We also need the next technical lemma contained in Proposition 2.3 of \cite{dobarro-unal} (see also Lemma 4 of \cite{melendez-hernandez}).

\begin{lemma} \label{lemma:dobarro}
    Let $M$ and $N$ be Riemannian manifolds, with $n=\dim(N)$, and let $f \colon M \to \mathbb{R}^+$ be a smooth function. Consider the warped product $\widebar{M}=M\times_fN$ and $u\in C^\infty(\widebar{M})$. Then
    \begin{gather*} \label{eq:lema-laplaciano}
        \Delta u=\Delta^Mu+\frac{n}{f}\metric{\grad^Mf}{\grad^Mu}_M+\frac{1}{f^2}\Delta^Nu.
    \end{gather*}
\end{lemma}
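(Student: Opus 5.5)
The plan is a direct computation of $\Delta u=\tr(\Hess\,u)$ in an adapted local orthonormal frame, using Lemma~\ref{lemma:oneill}(2) to control the mixed covariant derivatives. Fix $p\in\widebar{M}$. Choose a local orthonormal frame $\{E_1,\dots,E_m\}$ on $M$ and a local orthonormal frame $\{U_1,\dots,U_n\}$ on $N$, and lift both to $\widebar{M}$. Then $\{E_1,\dots,E_m,V_1,\dots,V_n\}$, with $V_j:=\frac1f U_j$, is a local orthonormal frame of $\widebar{M}$. I will write
\[
\Delta u=\sum_{i}\bigl(E_iE_iu-(\nabla_{E_i}E_i)u\bigr)+\sum_j\bigl(V_jV_ju-(\nabla_{V_j}V_j)u\bigr)
\]
and treat the two sums separately.

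\emph{Horizontal sum.} The leaves $M\times\{q\}$ are isometric to $M$. By Lemma~\ref{lemma:oneill}(2), for horizontal $X$ and vertical $V$,
\[
\metric{\nabla_{E_i}E_i}{V}=-\metric{E_i}{\nabla_{E_i}V}=-\tfrac{E_i(f)}{f}\metric{E_i}{V}=0 .
\]
Hence $\nabla_{E_i}E_i$ is horizontal, so the leaves are totally geodesic and $\nabla_{E_i}E_i$ is the lift of $\nabla^M_{E_i}E_i$. Therefore the first sum equals $\Delta^M u$, where $u$ is regarded as a function on each leaf.

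\emph{Vertical sum.} First I decompose $\nabla_VW$ for vertical $V,W$. Its horizontal part is determined, again via Lemma~\ref{lemma:oneill}(2), by
\[
\metric{\nabla_VW}{X}=-\metric{W}{\nabla_VX}=-\tfrac{X(f)}{f}\metric{V}{W},
\]
so it equals $-\frac{\metric{V}{W}}{f}\grad f$, with $\grad f$ horizontal and equal to the lift of $\grad^M f$. Its vertical part is the Levi-Civita connection of the fiber $\{x\}\times N$ with metric $f(x)^2\metric{\,}{}_N$. Since $f(x)$ is constant on the fiber, this connection coincides with $\nabla^N$, by the Koszul formula and invariance under constant rescaling.

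Applying this to $V_j=U_j/f$, and using that $f$ is constant along fibers (so $U_j(1/f)=0$), gives
\[
\nabla_{V_j}V_j=\tfrac1{f^2}\nabla^N_{U_j}U_j-\tfrac1f\grad^M f,\qquad V_jV_ju=\tfrac1{f^2}U_jU_ju .
\]
Summing over $j$ yields
\[
\tfrac1{f^2}\sum_j\bigl(U_jU_ju-(\nabla^N_{U_j}U_j)u\bigr)+\tfrac nf\,(\grad^Mf)(u)=\tfrac1{f^2}\Delta^Nu+\tfrac nf\metric{\grad^Mf}{\grad^Mu}_M .
\]
Adding the horizontal and vertical contributions gives the stated formula.

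The main obstacle is the vertical step: correctly identifying the tangential part of $\nabla_VW$ with $\nabla^N$ despite the scaling by $f^2$, and tracking the factors of $f$ in $V_j=U_j/f$. The key point is that $f$ is constant along each fiber, so the fiber is only a constant homothety of $N$, and the $1/f^2$ factors come out cleanly.
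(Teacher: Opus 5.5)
Your computation is correct. The paper gives no proof of this lemma; it imports it from Proposition~2.3 of \cite{dobarro-unal} (see also Lemma~4 of \cite{melendez-hernandez}), so your adapted-frame argument supplies a self-contained derivation that the paper omits.

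The individual steps hold up:
\begin{itemize}
\item $\{E_i,U_j/f\}$ is orthonormal.
\item Lemma~\ref{lemma:oneill}(2) shows that $\nabla_{E_i}E_i$ is horizontal, hence the lift of $\nabla^M_{E_i}E_i$.
\item The same lemma shows that the horizontal part of $\nabla_{V_j}V_j$ is $-\frac1f\grad f$.
\item Because $f$ is constant on fibres, $V_jV_ju=f^{-2}U_jU_ju$, and the fibre connection of $f(x)^2\metric{\,}{}_N$ is $\nabla^N$.
\end{itemize}
The paper states Lemma~\ref{lemma:oneill} under $\dim N>1$, but item (2) is a connection identity valid in every dimension, so your argument also covers $n=1$.

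For comparison, a shorter self-contained route is the coordinate formula $\Delta u=\frac{1}{\sqrt{\det g}}\,\partial_i\bigl(\sqrt{\det g}\,g^{ij}\partial_ju\bigr)$. In product coordinates $\sqrt{\det g}=f^n\sqrt{\det g_M}\,\sqrt{\det g_N}$, and the $N$-block of $g^{-1}$ is $f^{-2}g_N^{-1}$. Differentiating the density factor $f^n$ in the $M$-directions yields $\frac nf\metric{\grad^Mf}{\grad^Mu}_M$, while the $N$-block gives $f^{-2}\Delta^Nu$. No connection formulas are needed.

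Your frame computation is longer, but it explains the middle term geometrically. The leaves $M\times\{q\}$ are totally geodesic, so nothing extra appears horizontally. The fibres are totally umbilic with mean curvature vector $-\frac1f\grad f$. The term $\frac nf\metric{\grad^Mf}{\grad^Mu}_M$ is exactly minus $n$ times this vector applied to $u$.
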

\medskip

In the sequel, we will need tome results related to a property of the normal intersection of two hypersurfaces in a Riemannian ambient. This auxiliary result can be seen as a consequence of a general pattern concerning the intersection of submanifolds of arbitrary codimension. In fact it will be relevant in the proof of Theorem \ref{theo:normality}.
\medskip


\begin{proposition} \label{prop:intersection}
    Let $M$ and $N$ be submanifolds of a Riemannian manifold $\widebar{M}$ such that $\Sigma=M\cap N$ is a submanifold of $\widebar{M}$. If $N$ is totally umbilical in $\widebar{M}$, then
    \begin{gather*}
        \mathbf{H}_\Sigma^M=(\mathbf{H}_\Sigma^N)^\top+(\mathbf{H}_N^{\widebar{M}})^\top \quad \text{along $\Sigma$},
    \end{gather*}
    where $(\cdot)^\top$ is the projection over $TM$.
\end{proposition}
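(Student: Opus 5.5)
The plan is to compare the Gauss formulas for the three nested inclusions $\Sigma\subset M\subset\widebar{M}$ and $\Sigma\subset N\subset\widebar{M}$ and then take traces. Let $k=\dim\Sigma$ and fix a local orthonormal frame $\{e_1,\dots,e_k\}$ of $T\Sigma$. For $X,Y\in T\Sigma$ we decompose the ambient derivative $\nabla_XY$ in two ways. Through $M$,
\[
\nabla_XY=\nabla^M_XY+\alpha_M(X,Y)=\nabla^\Sigma_XY+\alpha^M_\Sigma(X,Y)+\alpha_M(X,Y),
\]
where $\alpha^M_\Sigma(X,Y)\in TM\cap T^\perp\Sigma$ is the second fundamental form of $\Sigma$ in $M$ and $\alpha_M(X,Y)\in T^\perp M$. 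Through $N$,
\[
\nabla_XY=\nabla^\Sigma_XY+\alpha^N_\Sigma(X,Y)+\alpha_N(X,Y),
\]
with $\alpha^N_\Sigma(X,Y)\in TN\cap T^\perp\Sigma$ and $\alpha_N(X,Y)\in T^\perp N$.

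The tangential parts $\nabla^\Sigma_XY$ coincide in both expressions, since each is the orthogonal projection of $\nabla_XY$ onto $T\Sigma$. Subtracting the two decompositions therefore gives
\[
\alpha^M_\Sigma(X,Y)+\alpha_M(X,Y)=\alpha^N_\Sigma(X,Y)+\alpha_N(X,Y).
\]
Next we apply the projection $(\cdot)^\top$ onto $TM$. On the left, $\alpha^M_\Sigma$ is already tangent to $M$ and $\alpha_M$ is normal to $M$. The projected identity is therefore
\[
\alpha^M_\Sigma(X,Y)=(\alpha^N_\Sigma(X,Y))^\top+(\alpha_N(X,Y))^\top .
\]

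Now we use the hypothesis that $N$ is totally umbilical in $\widebar{M}$. This means $\alpha_N(X,Y)=\metric{X}{Y}\,\mathbf{H}_N^{\widebar{M}}$ for all $X,Y\in TN$, which applies in particular to $X,Y\in T\Sigma$. We set $X=Y=e_i$ and sum over $i$. Projection is linear, and the mean curvature vectors are the traces of the second fundamental forms, normalized by the same factor $1/k$ with $k=\dim\Sigma$ for both $\Sigma$-terms. Summing yields
\[
\mathbf{H}^M_\Sigma=(\mathbf{H}^N_\Sigma)^\top+\frac{1}{k}\sum_{i=1}^k(\alpha_N(e_i,e_i))^\top=(\mathbf{H}^N_\Sigma)^\top+(\mathbf{H}_N^{\widebar{M}})^\top .
\]

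The main point needing care is the normalization of the last term. Umbilicity gives $\sum_i\alpha_N(e_i,e_i)=k\,\mathbf{H}_N^{\widebar{M}}$, with the factor $k=\dim\Sigma$ rather than $\dim N$. This is exactly what cancels the $1/k$ and makes the final term $(\mathbf{H}_N^{\widebar{M}})^\top$ without any extra coefficient. If instead the mean curvatures are taken unnormalized, as traces, a factor $k/\dim N$ appears in front of that term, so the normalization convention must be fixed consistently at the start. Apart from that, the argument is pure linear algebra of orthogonal decompositions and uses no curvature identities.
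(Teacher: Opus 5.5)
Your proof is correct and is essentially the paper's argument. The paper likewise equates the two decompositions $B_\Sigma^{\widebar{M}}=B_\Sigma^M+B_M^{\widebar{M}}=B_\Sigma^N+B_N^{\widebar{M}}$, traces over an orthonormal frame of $\Sigma$ using the umbilicity of $N$, and projects onto $TM$ to kill the $B_M^{\widebar{M}}$ term; the only difference is that it traces before projecting while you project before tracing, which changes nothing.
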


\begin{proof}
    Recall that for all $X,Y\in\mathfrak{X}(\Sigma)$, we have the decomposition
    \begin{gather*}
        B_\Sigma^{\widebar{M}}(X,Y)= B_\Sigma^M(X,Y)+B_M^{\widebar{M}}(X,Y)=B_\Sigma^N(X,Y)+B_N^{\widebar{M}}(X,Y).
    \end{gather*}
    Therefore, if $k=\dim(\Sigma)$ and $\{E_1,\dots,E_k\}$ is a local orthonormal frame in $\Sigma$, we deduce from de umbilicity of $N$ in $\widebar{M}$ that
    \begin{gather*}
        \begin{split}
            k\mathbf{H}_\Sigma^M&=\sum_{i=1}^k B_\Sigma^M(E_i,E_i) \\
            &=\sum_{i=1}^k B_\Sigma^N(E_i,E_i)+\sum_{i=1}^k B_N^{\widebar{M}}(E_i,E_i)-\sum_{i=1}^k B_M^{\widebar{M}}(E_i,E_i) \\
            &=k\mathbf{H}_\Sigma^N+k\mathbf{H}_N^{\widebar{M}}-\sum_{i=1}^k B_M^{\widebar{M}}(E_i,E_i),
        \end{split}
    \end{gather*}
    and by projecting the last equation to $TM$ we get the desired formula.
\end{proof}

When $M$ and $N$ are hypersurfaces and do not intersect ``tangentially'', we obtain:

\begin{corollary} \label{coro:intersection}
    Let $M$ and $N$ be oriented hypersurfaces of $\widebar{M}$, where $\eta$ and $\xi$ are their respective unit normal 
    vectors, and suppose that $M \cap N \neq \emptyset$. If $N$ is totally umbilical in $\widebar{M}$ and the angle $\phi=\angle(\eta,\xi)\in(0,\pi)$, then
    \begin{gather*}
        \mathbf{H}_\Sigma^M=\left(H_\Sigma^N\cos(\phi)\pm H_N^{\widebar{M}}\sin(\phi)\right)\eta_* \quad \text{along $\Sigma=M\cap N$},
    \end{gather*}
    where $\eta_*$ is the unit normal vector of $\Sigma$ respect to $M$, and $H_\Sigma^N$ and $H_N^{\widebar{M}}$ are the scalar mean curvatures of $\Sigma\subset N$ and $N\subset\widebar{M}$, respectively. In particular, when $M$ and $N$ are normal hypersurfaces in $\widebar{M}$, we have
    \begin{gather*}
        \mathbf{H}_\Sigma^M=\mathbf{H}_N^{\widebar{M}}= H_N^{\widebar{M}}\xi
    \end{gather*}
    and $\Sigma$ is totally umbilical in $M$.
\end{corollary}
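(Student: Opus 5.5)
The plan is to specialise Proposition \ref{prop:intersection} to the case of two hypersurfaces by working in the two-dimensional normal bundle of $\Sigma$. Since $\phi\in(0,\pi)$, the normals $\eta$ and $\xi$ are linearly independent at every point of $\Sigma$. Hence $M$ and $N$ meet transversally and $\Sigma$ has codimension $2$ in $\widebar{M}$. Consequently the normal space $T_p^\perp\Sigma$ (in $\widebar{M}$) is a $2$-plane with two orthonormal bases: $\{\eta_*,\eta\}$, where $\eta_*\in T_pM$ is the unit normal of $\Sigma$ in $M$, and $\{\xi_*,\xi\}$, where $\xi_*\in T_pN$ is the unit normal of $\Sigma$ in $N$.

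\textbf{Step 1: rotation relations.} Since $\xi\perp\xi_*$ and $\angle(\eta,\xi)=\phi$, elementary plane geometry gives
\[
\xi=\cos(\phi)\,\eta\pm\sin(\phi)\,\eta_*,\qquad \xi_*=\mp\sin(\phi)\,\eta+\cos(\phi)\,\eta_*.
\]
Here the orientation of $\xi_*$ is chosen so that the coefficient of $\eta_*$ is $\cos(\phi)$. The $\pm$ sign depends only on orientations, and fixing these conventions consistently is the only delicate point of the argument.

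\textbf{Step 2: apply Proposition \ref{prop:intersection}.} Write $\mathbf{H}_\Sigma^N=H_\Sigma^N\xi_*$ and $\mathbf{H}_N^{\widebar{M}}=H_N^{\widebar{M}}\xi$. Both vectors lie in the normal plane of $\Sigma$. The projection of this plane onto $TM$ is the orthogonal projection onto $\mathrm{span}\{\eta_*\}$, so
\[
(\mathbf{H}_\Sigma^N)^\top=H_\Sigma^N\cos(\phi)\,\eta_*,\qquad (\mathbf{H}_N^{\widebar{M}})^\top=\pm H_N^{\widebar{M}}\sin(\phi)\,\eta_*.
\]
Substituting into Proposition \ref{prop:intersection} gives the first formula.

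\textbf{Step 3: the normal case.} When $\phi=\pi/2$, Step 1 gives $\xi=\pm\eta_*$ and $\cos(\phi)=0$. Therefore
\[
\mathbf{H}_\Sigma^M=\pm H_N^{\widebar{M}}\eta_*=H_N^{\widebar{M}}\xi=\mathbf{H}_N^{\widebar{M}}.
\]

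\textbf{Step 4: umbilicity.} For umbilicity the mean curvature alone is not enough, so I return to the second fundamental forms. For $X,Y\in\mathfrak{X}(\Sigma)$, recall the decomposition from the proof of Proposition \ref{prop:intersection}:
\[
B_\Sigma^M+B_M^{\widebar{M}}=B_\Sigma^N+B_N^{\widebar{M}}.
\]
Take the inner product of both sides with $\xi$ and compute each term:
\begin{itemize}
\item $B_M^{\widebar{M}}(X,Y)$ is parallel to $\eta\perp\xi$, so it contributes nothing;
\item $B_\Sigma^N(X,Y)$ is parallel to $\xi_*\perp\xi$, so it also contributes nothing;
\item by umbilicity of $N$, $B_N^{\widebar{M}}(X,Y)=H_N^{\widebar{M}}\metric{X}{Y}\xi$.
\end{itemize}
This yields $\metric{B_\Sigma^M(X,Y)}{\xi}=H_N^{\widebar{M}}\metric{X}{Y}$. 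Since $B_\Sigma^M$ takes values in $\mathrm{span}\{\eta_*\}=\mathrm{span}\{\xi\}$, we conclude
\[
B_\Sigma^M(X,Y)=H_N^{\widebar{M}}\metric{X}{Y}\,\xi,
\]
that is, $\Sigma$ is totally umbilical in $M$.
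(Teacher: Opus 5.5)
Your proposal is correct and follows the paper's proof. Like the paper, you write $\{\xi_*,\xi\}$ as a rotation of $\{\eta_*,\eta\}$ by $\phi$ in the normal plane of $\Sigma$; your choice of orientation for $\xi_*$ is equivalent to the paper's requirement that both frames be positively oriented. You then project $\mathbf{H}_\Sigma^N$ and $\mathbf{H}_N^{\widebar{M}}$ onto $\eta_*$, apply Proposition~\ref{prop:intersection}, and specialise to $\phi=\pi/2$.

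Your Step 4 goes beyond the paper, which asserts that $\Sigma$ is totally umbilical in $M$ without argument. Pairing $B_\Sigma^M+B_M^{\widebar{M}}=B_\Sigma^N+B_N^{\widebar{M}}$ with $\xi$ is a valid way to fill that gap. It uses $\eta\perp\xi$ and $\mathrm{span}\{\eta_*\}=\mathrm{span}\{\xi\}$, which hold precisely because $\phi=\pi/2$.
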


\begin{proof}
    As $M$ and $N$ are hypersurfaces and $\phi\in(0,\pi)$, necessarily $M$ and $N$ intersect transversally, so $\Sigma$ is a submanifold of $\widebar{M}$ of codimension $2$ (see Remark \ref{remark:normal}).
    
    Let $\eta_*$ and $\xi_*$ be the unit normal vectors of $\Sigma$ respect to $\eta$ and $\xi$, respectively, meaning that $\{\eta_*,\eta\}$ and $\{\xi_*,\xi\}$ are positively oriented orthonormal bases of the plane $T_p^\perp\Sigma$ for all $p\in\Sigma$. Consequently the frame $\{\xi_*,\xi\}$ is obtained by rotating the frame $\{\eta_*,\eta\}$ an angle $\phi$, and depending on the position of the frames, the rotation can be clockwise or counterclockwise. Therefore we can write
    \begin{gather*}
        \xi_*=\cos(\phi)\eta_*\pm\sin(\phi)\eta, \quad \xi=\mp\sin(\phi)\eta_*+\cos(\phi)\eta,
    \end{gather*}
    which implies
    \begin{gather*}
        (\mathbf{H}_\Sigma^N)^\top=(H_\Sigma^N\xi_*)^\top=H_\Sigma^N\cos(\phi)\eta_*, \quad (\mathbf{H}_N^{\widebar{M}})^\top=(H_N^{\widebar{M}}\xi)^\top=\mp H_N^{\widebar{M}}\sin(\phi)\eta_*.
    \end{gather*}
    Therefore, from Proposition \ref{prop:intersection} we obtain
    \begin{gather*}
        \mathbf{H}_\Sigma^M=(\mathbf{H}_\Sigma^N)^\top+(\mathbf{H}_N^{\widebar{M}})^\top=\left(H_\Sigma^N\cos(\phi)\mp H_N^{\widebar{M}}\sin(\phi)\right)\eta_*.
    \end{gather*}
    If $M$ and $N$ are normal hypersurfaces in $\widebar{M}$ ($\phi\equiv\frac{\pi}{2}$), 
    then 
    \begin{gather*}
        \mathbf{H}_\Sigma^M=(\mathbf{H}_N^{\widebar{M}})^\top=\mp H_N^{\widebar{M}}\eta_*
        =H_N^{\widebar{M}}\xi.
    \end{gather*}\end{proof}

\section{Proofs of the main results}

\subsection{Proof of Theorem \ref{theo:integral-inequality}} \label{sec:proof2}

We denote by $\Ric$ and $\Ric^M$ the Ricci curvatures of $\widebar{M}$ and $M$, respectively. For the proof of Theorem \ref{theo:integral-inequality} we need a well-known fact about warped products, and given that it shares the same context with another fact that will be used in the proof of Theorem \ref{theo:normality}, we present both in the following lemma (see Chapter 7 in \cite{oneill}).

\begin{proof}[\bfseries Proof of Theorem \ref{theo:integral-inequality}]
    Let $\{E_1,\dots,E_m\}$ be an orthonormal frame in $M$. Since $M$ is Ricci flat, Lemma \ref{lemma:oneill} implies
    \begin{gather} \label{eq:ricci}
        \sum_{k=1}^m \Ric(E_k,E_k)=-\frac{n}{f}\sum_{k=1}^m \Hess^Mf(E_k,E_k)=-\frac{n}{f}\Delta^Mf.
    \end{gather}
    If we set $u=\ln f$, we obtain
    \begin{gather} \label{eq:laplacian1}
        \begin{split}
            \Delta^Mu&=\sum_{k=1}^m \metric{\nabla^M_{E_k}(\grad^Mu)}{E_k}_M \\
            &=\sum_{k=1}^m \metric{\nabla^M_{E_k}\left(\medmath{\frac{\grad^Mf}{f}}\right)}{E_k}_M=\frac{1}{f}\Delta^Mf-\frac{1}{f^2}\norm{\grad^Mf}_M^2.
        \end{split}
    \end{gather}
    Now, by using Lemma \ref{lemma:dobarro},
    \begin{gather} \label{eq:laplacian2}
        \Delta u=\Delta^Mu+\frac{n}{f}\metric{\grad^Mf}{\grad^Mu}_M=\Delta^Mu+\frac{n}{f^2}\norm{\grad^Mf}_M^2.
    \end{gather}

    If we substitute \eqref{eq:laplacian1} in \eqref{eq:laplacian2}, we find that
    \begin{gather*}
        \Delta u=\frac{\Delta^Mf}{f}+(n-1)\left(\frac{\norm{\grad^Mf}_M}{f}\right)^2=\frac{\Delta^Mf}{f}+(n-1)\norm{\mathcal{H}}_M^2,
    \end{gather*}
    where $\mathcal{H}=\medmath{\frac{1}{f}}\,\grad^Mf$. It follows from \eqref{eq:ricci} that
    \begin{gather} \label{eq:laplacian3}
        n\Delta u=-\sum_{k=1}^m \Ric(E_k,E_k)+n(n-1)\norm{\mathcal{H}}_M^2.
    \end{gather}
    By the compactness of $\widebar{M}$, we can integrate both sides of \eqref{eq:laplacian3} to obtain
    \begin{gather*}
        \sum_{k=1}^m \int_{\widebar{M}} \Ric(E_k,E_k)\,d{\widebar{M}}=n(n-1)\int_{\widebar{M}} \norm{\mathcal{H}}_M^2\,d{\widebar{M}}\geq0.
    \end{gather*}
    Observe that equality holds if and only if $\mathcal{H}=0$, which means that $f$ is constant.
\end{proof}

\subsection{Proof of Theorem \ref{theo:normality}} \label{sec:proof3}

We say that $M$ is \textit{parabolic} if the only subharmonic functions on $M$ which are bounded from above are the constant ones. Explicitly stated, this means that if $u\in C^2(M)$ is such that $\Delta^Mu\geq0$ and $\sup_Mu<\infty$, then $u$ must be constant. Being parabolic is equivalent to have that the only superharmonic functions on $M$ which are bounded from below are the constant ones, or explicitly, if $u\in C^2(M)$ satisfies $\Delta^Mu\leq0$ and $\inf_Mu>-\infty$, then $u$ must be constant.

It is well known that every compact manifold is parabolic. In particular, any sphere $S^n$ is parabolic. On the other hand, the Euclidean space $\mathbb{R}^n$ is parabolic if and only if $n=1,2$. To see that $\mathbb{R}^n$ is not parabolic for $n\geq3$, it is sufficient to give an explicit example of a positive non constant superharmonic function, like the map
\begin{gather*}
    u(x)=(1+\norm{x}^2)^{-\frac{n-2}{2}}.
\end{gather*}

For the proof of Theorem \ref{theo:normality} we use the following well-known principle due to H. Omori and S. T. Yau (see \cite{yau}).

\begin{theorem}
    Let $M$ be a complete Riemannian manifold whose Ricci curvature is bounded from below. Consider $u\in C^2(M)$ that is bounded from below on $M$. Then there exists a sequence $\{p_j\}$ in $M$ such that
    \begin{gather} \label{eq:omori-yau}
        \lim_{j\to\infty}u(p_j)=\inf_Mu, \quad \norm{\grad^Mu(p_j)}<\frac{1}{j}, \quad \Delta^Mu(p_j)>-\frac{1}{j}.
    \end{gather}
\end{theorem}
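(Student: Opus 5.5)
The statement is the classical Omori--Yau maximum principle, and I would prove it by the standard perturbation argument: add a small multiple of a function of the distance to a fixed point, so that $u$ becomes proper and attains a minimum. Fix $o\in M$, let $r(x)=d(o,x)$, and let $m=\dim M$. Choose $K\geq0$ with $\Ric^M\geq-(m-1)K^2$. As perturbation I take $\varphi=\sqrt{1+r^2}$. It satisfies $0\le\varphi'(r)\le 1$ and $0<\varphi''(r)\le1$, and it tends to infinity as $r\to\infty$.

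\textbf{Step 1: first- and second-order bounds on $\varphi$.} Since $\norm{\grad^M r}=1$ wherever $r$ is smooth, we have $\norm{\grad^M\varphi}\le 1$. The Laplacian comparison theorem gives $\Delta^M r\le (m-1)K\coth(Kr)\le (m-1)(K+1/r)$ off the cut locus of $o$. Hence
\[
\Delta^M\varphi=\varphi''\norm{\grad^M r}^2+\varphi'\Delta^M r\le 1+\frac{r}{\sqrt{1+r^2}}\,(m-1)\Bigl(K+\frac1r\Bigr)\le C,
\]
where $C$ depends only on $m$ and $K$.

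\textbf{Step 2: the minimization.} Given $j$, first pick $x_j$ with $u(x_j)<\inf_M u+\tfrac1{2j}$. Then choose $\varepsilon_j<\min\{1/j,\ 1/(jC)\}$ so small that $\varepsilon_j\varphi(x_j)<\tfrac1{2j}$. Set $u_j=u+\varepsilon_j\varphi$. Because $u$ is bounded below and $\varphi\to\infty$, the sublevel sets of $u_j$ are closed and bounded, hence compact by completeness (Hopf--Rinow). So $u_j$ attains its minimum at some point $p_j$. Then
\[
\inf_M u\le u(p_j)\le u_j(p_j)\le u_j(x_j)<\inf_M u+\tfrac1j,
\]
which gives the first condition in \eqref{eq:omori-yau}. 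If $\varphi$ is smooth at $p_j$, then $\grad^M u(p_j)=-\varepsilon_j\grad^M\varphi(p_j)$, so $\norm{\grad^M u(p_j)}\le\varepsilon_j<1/j$. Likewise $\Delta^M u(p_j)\ge-\varepsilon_j\Delta^M\varphi(p_j)\ge-C\varepsilon_j>-1/j$. These are the remaining two conditions.

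\textbf{Step 3: the cut locus (main obstacle).} If $p_j$ lies in the cut locus of $o$, then $r$ is not differentiable there, and this is the delicate step. I would use Calabi's trick. Let $\gamma$ be a minimizing geodesic from $o$ to $p_j$, take $o_\delta=\gamma(\delta)$, and set $r_\delta(x)=\delta+d(o_\delta,x)$. By the triangle inequality $r_\delta\ge r$ everywhere, with equality at $p_j$. Moreover $r_\delta$ is smooth near $p_j$, because $p_j$ is not conjugate to $o_\delta$ along the tail of $\gamma$.

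Define $\varphi_\delta=\sqrt{1+r_\delta^2}$. Since $\varphi$ is increasing in $r$, we get $\varphi_\delta\ge\varphi$ with equality at $p_j$. Therefore $u+\varepsilon_j\varphi_\delta$ still has a local minimum at $p_j$, and the computation of Step 2 applies to it. The comparison estimate for $\Delta^M r_\delta$ at $p_j$ is $(m-1)(K+1/(r-\delta))$, which tends to the same bound as $\delta\to0$. So the same bound $C$ (enlarged slightly if needed) holds, and the gradient bound is unchanged.

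Combining the three steps produces the sequence $\{p_j\}$ satisfying \eqref{eq:omori-yau}.
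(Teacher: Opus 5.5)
The paper does not prove this statement: it quotes the Omori--Yau principle from Yau \cite{yau} and uses it as a black box. Your argument is a correct, self-contained version of the standard proof. You perturb $u$ by $\varepsilon_j\varphi(r)$ so that a minimum is attained, control $\varphi$ with the Laplacian comparison theorem, and handle cut points with Calabi's trick. Compared with the citation, it shows exactly where the hypotheses enter. Completeness is used only through Hopf--Rinow, to get compact sublevel sets of $u+\varepsilon_j\varphi$. The Ricci lower bound is used only through the two estimates $\norm{\grad^M\varphi}\le 1$ and $\Delta^M\varphi\le C$, understood in Calabi's barrier sense at cut points. This isolates the only two properties of the exhaustion function that the principle needs, which is the point of view taken by Pigola, Rigoli and Setti. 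The citation buys brevity.

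Three small points should be tightened; none of them is a gap.

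\emph{Smoothness of $r_\delta$.} In Step 3, smoothness of $r_\delta$ near $p_j$ requires $p_j\notin\mathrm{Cut}(o_\delta)$. Non-conjugacy of $p_j$ to $o_\delta$ along $\gamma$ is not enough by itself. You also need $\gamma|_{[\delta,r(p_j)]}$ to be the unique minimizing geodesic from $o_\delta$ to $p_j$. This holds because any other one, preceded by $\gamma|_{[0,\delta]}$, would be a broken minimizing curve from $o$ to $p_j$.

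\emph{Do not enlarge $C$.} You fixed $\varepsilon_j$ in terms of $C$, so enlarging $C$ afterwards could break $\varepsilon_j<1/(jC)$. Instead, note that $\Delta^M u(p_j)\ge-\varepsilon_j\Delta^M\varphi_\delta(p_j)$ holds for every small $\delta>0$, while the left-hand side does not depend on $\delta$. Letting $\delta\to0$ gives $\Delta^M u(p_j)\ge-\varepsilon_j C>-1/j$ with the original constant. For example, with $C=1+(m-1)(K+1)$ the limiting bound $(1+r^2)^{-3/2}+(m-1)\frac{rK+1}{\sqrt{1+r^2}}$ is at most $C$.

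\emph{The case $p_j=o$.} This case belongs to Step 2, since $r^2$, and hence $\varphi$, is smooth near $o$. There $\Delta^M\varphi(o)=m\le C$, even though your formula for $\Delta^M\varphi$ in terms of $\Delta^M r$ does not apply at $r=0$.
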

\medskip

Let $N$ be a submanifold of $\widebar{M}$. Let us denote by $B_N^{\widebar{M}}$ the second fundamental form of $N$ in $\widebar{M}$, this means that
\begin{gather*}
    \widebar{\nabla}_X Y=\nabla_X^NY+B_N^{\widebar{M}}(X,Y), \quad X,Y\in\mathfrak{X}(N).
\end{gather*}
If $n=\dim(N)$,  the mean curvature vector of $N$ in $\widebar{M}$ is given by
\begin{gather*}
\mathbf{H}_N^{\widebar{M}}=\frac{1}{n}\sum_{i=1}^n B_N^{\widebar{M}}(E_i,E_i),
\end{gather*}
where $\{E_1,\dots,E_n\}$ is a local orthonormal frame on $N$. A submanifold $N$  is said to be totally umbilical  if 
$$B_N^{\widebar{M}} (X,Y) = \langle X, Y \rangle_N \mathbf{H}_N^{\widebar{M}}$$
for every $X, Y \in \mathfrak{X} (N)$.
\newline

Let $P^{n-1}$ be a Riemannian manifold with metric $\metric{\,}{}_P$, and consider the warped product $M^n=\mathbb{R}\times_fP$ with its warped metric $\metric{\,}{}=dt^2+f^2\metric{\,}{}_P$. Let $\Sigma$ be a complete oriented hypersurface of $M$, where $\eta$ is the unit normal vector of $\Sigma$. Then the shape operator of $\Sigma$ respect to $\eta$ is given by
\begin{gather*}
    AX=-\nabla_X\eta, \quad \text{where $X\in\mathfrak{X}(M)$}.
\end{gather*}

We also need to compute the Laplacian of the height function $h \colon \Sigma \to \mathbb{R}$ (see Proposition 2.1 of \cite{alias-dajczer}). For the reader's convenience we include a detailed proof.

\begin{lemma}
    Let $\Sigma^{n-1}$ be an oriented hypersurface of $M^n=\mathbb{R}\times_fP$. Then
    \begin{gather} \label{eq:alias-dajczer}
        \Delta^\Sigma h=\mathcal{H}(h)\bigl(n-1-\norm{\grad^\Sigma h}^2\bigr)+(n-1)\metric{\partial_t}{\mathbf{H}_\Sigma^M},
    \end{gather}
    where $\mathbf{H}_\Sigma^M$ is the mean curvature vector of $\Sigma$ respect to $M$ and $\partial_t$ is the coordinate vector field of the first factor of $M$.
\end{lemma}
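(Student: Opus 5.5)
Write $\bar h=h$ composed with nothing else, i.e. regard the height function $h=\pi_{\mathbb{R}}\circ\iota$, where $\iota\colon\Sigma\to M$ is the inclusion. The plan is to compute $\Delta^\Sigma h$ by restricting an ambient computation on $M$ to $\Sigma$.

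First we compute on $M$. Let $\pi_{\mathbb{R}}$ be the projection onto the first factor. Then $\grad^M\pi_{\mathbb{R}}=\partial_t$, since $dt(X)=\metric{\partial_t}{X}$ for the metric $dt^2+f^2\metric{\,}{}_P$. By Lemma \ref{lemma:oneill}(2), applied with base $\mathbb{R}$ and $X=\partial_t$, we have $\nabla_V\partial_t=\mathcal{H}(t)V$ for vertical $V$. Also $\nabla_{\partial_t}\partial_t=0$, because the $t$-lines are geodesics. Hence for any $X\in\mathfrak{X}(M)$,
\begin{gather*}
\nabla_X\partial_t=\mathcal{H}(t)\bigl(X-\metric{X}{\partial_t}\partial_t\bigr),
\end{gather*}
so that $\Hess^M\pi_{\mathbb{R}}(X,Y)=\mathcal{H}\bigl(\metric{X}{Y}-\metric{X}{\partial_t}\metric{Y}{\partial_t}\bigr)$.

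Next we restrict to $\Sigma$. On $\Sigma$ we decompose $\partial_t=\grad^\Sigma h+\metric{\partial_t}{\eta}\eta$, which identifies $\grad^\Sigma h$ with the tangential part $\partial_t^\top$. We then use the standard formula for the Laplacian of the restriction of a function to a submanifold:
\begin{gather*}
\Delta^\Sigma h=\sum_{i=1}^{n-1}\Hess^M\pi_{\mathbb{R}}(E_i,E_i)+\metric{\grad^M\pi_{\mathbb{R}}}{(n-1)\mathbf{H}_\Sigma^M},
\end{gather*}
where $\{E_i\}$ is a local orthonormal frame of $\Sigma$. This formula comes from differentiating $\partial_t^\top$ along $E_i$ and using $\nabla_{E_i}E_i=\nabla^\Sigma_{E_i}E_i+B_\Sigma^M(E_i,E_i)$.

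Finally we evaluate the trace. Substituting the Hessian gives
\begin{gather*}
\sum_{i=1}^{n-1}\Hess^M\pi_{\mathbb{R}}(E_i,E_i)=\mathcal{H}(h)\Bigl(n-1-\sum_{i=1}^{n-1}\metric{E_i}{\partial_t}^2\Bigr)=\mathcal{H}(h)\bigl(n-1-\norm{\grad^\Sigma h}^2\bigr),
\end{gather*}
and adding the mean curvature term yields \eqref{eq:alias-dajczer}.

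The only delicate points are bookkeeping ones. One must check the sign convention $\Delta=\tr\Hess$ in the restriction formula, and use $\mathbf{H}=\frac1{n-1}\sum_i B(E_i,E_i)$ with $B$ defined by $\nabla_XY=\nabla^\Sigma_XY+B(X,Y)$, so that no orientation sign enters. Since $\Sigma$ is oriented, $\metric{\partial_t}{\mathbf{H}_\Sigma^M}$ can equivalently be written as $H\metric{\partial_t}{\eta}$. No step is a real obstacle; the computation of $\nabla\partial_t$ via Lemma \ref{lemma:oneill} is the key input.
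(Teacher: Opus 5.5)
Your proposal is correct and is essentially the paper's proof. Both arguments use Lemma~\ref{lemma:oneill}(2) together with $\nabla_{\partial_t}\partial_t=0$ to get $\nabla_X\partial_t=\mathcal{H}(t)\bigl(X-\metric{X}{\partial_t}\partial_t\bigr)$, identify $\grad^\Sigma h=\partial_t^\top$, and take the trace over a frame of $\Sigma$. The only difference is packaging: the paper differentiates $\partial_t-\metric{\partial_t}{\eta}\eta$ explicitly and obtains the shape-operator term $\metric{\partial_t}{\eta}AX$, whereas you invoke the general identity $\Hess^\Sigma u=\Hess^M u+\metric{\grad^M u}{B_\Sigma^M(\cdot,\cdot)}$, which gives the same term $(n-1)\metric{\partial_t}{\mathbf{H}_\Sigma^M}$.
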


\begin{proof}
    First note that the gradient of the projection $\pi_1 \colon M \to \mathbb{R}$ is given by $\grad^M\pi_1=\partial_t$. Thus the gradient of the height function $h(p)=\pi_1(p)$ is given by
    \begin{gather} \label{eq:gradient-h}
        \grad^\Sigma h=(\grad^M\pi_1)^\top=\partial_t^\top=\partial_t-\metric{\partial_t}{\eta}\eta,
    \end{gather}
    where $\left(\cdot\right)^\top$ denotes the projection over $T\Sigma$. On the other hand, we know that each $X\in\mathfrak{X}(M)$ can be decomposed as $X=\metric{X}{\partial_t}\partial_t+V$, where $V=X-\metric{X}{\partial_t}\partial_t$. It follows from item (2) of Lemma \ref{lemma:oneill} that
    \begin{gather} \label{eq:partialt}
        \nabla_X^M\partial_t=\metric{X}{\partial_t}\cancelto{0}{\nabla_{\partial_t}\partial_t}+\nabla_V\partial_t=\mathcal{H}(t)V=\mathcal{H}(t)(X-\metric{X}{\partial_t}\partial_t),
    \end{gather}
    where $\mathcal{H}(t)=f'(t)/f(t)$. Using \eqref{eq:gradient-h} and \eqref{eq:partialt} we deduce that
    \begin{gather*}
        \nabla_X^M(\grad^\Sigma h)=\nabla_X(\partial_t-\metric{\partial_t}{\eta}\eta)=\mathcal{H}(t)(X-\metric{X}{\partial_t}\partial_t)-X\metric{\partial_t}{\eta}\eta-\metric{\partial_t}{\eta}\nabla_X\eta.
    \end{gather*}
    If we consider $X\in\mathfrak{X}(\Sigma)$ in the above equation, by projecting over $T\Sigma$ we get
    \begin{gather*}
        \begin{split}
            \nabla_X^\Sigma(\grad^\Sigma h)&=\bigl(\nabla_X^M(\grad^\Sigma h)\bigr)^\top \\
            &=\mathcal{H}(h)\bigl(X-\metric{X}{\partial_t}\partial_t^\top\bigr)-\metric{\partial_t}{\eta}(\nabla_X\eta)^\top \\
            &=\mathcal{H}(h)\bigl(X-\metric{X}{\grad^\Sigma h}\grad^\Sigma h\bigr)+\metric{\partial_t}{\eta}AX.
        \end{split}
    \end{gather*}
    Therefore, if we consider an orthonormal frame $\{E_i\}$ on $\Sigma$, we conclude that
    \begin{gather*}
        \begin{split}
            \Delta^\Sigma h&=\sum_{i=1}^{n-1} \metric{\nabla_{E_i}^\Sigma(\grad^\Sigma h)}{E_i} \\
            &=\sum_{i=1}^{n-1} \left(\mathcal{H}(h)\bigl(1-\metric{E_i}{\grad^\Sigma h}^2\bigr)+\metric{\partial_t}{\eta}\metric{AE_i}{E_i}\right) \\
            &=\mathcal{H}(h)\bigl(n-1-\norm{\grad^\Sigma h}^2\bigr)+(n-1)\metric{\partial_t}{\mathbf{H}_\Sigma^M}.
        \end{split}
    \end{gather*} 
\end{proof}

\begin{proof}[\bfseries Proof of Theorem \ref{theo:normality}]
    Let $M=\mathbb{R}\times_fP$ be a warped product hypersurface and $N$ a totally geodesic hypersurface of $\widebar{M}$, and assume that $M$ and $N$ are normal hypersurfaces. Then Corollary \ref{coro:intersection} 
    implies
    \begin{gather*}
        \mathbf{H}_{\Sigma}^M=\mathbf{H}_N^{\widebar{M}}=0 \quad \text{on $\Sigma=M \cap N$}.
    \end{gather*}
    Then equation \eqref{eq:alias-dajczer} yields
    \begin{gather} \label{eq:laplacian4}
        \Delta^\Sigma h=\mathcal{H}(h)\left(n-1-\norm{\grad^\Sigma h}^2\right).
    \end{gather}

    By the Omori-Yau maximum principle \eqref{eq:omori-yau} we know that there exists a sequence $\{p_j\}$ in $\Sigma$ such that
    \begin{gather*}
        \lim_{j\to\infty}h(p_j)=h_*, \quad \norm{\grad^\Sigma h(p_j)}^2<\frac{1}{j^2}, \quad \Delta^\Sigma h(p_j)>-\frac{1}{j},
    \end{gather*}
    where $h_*=\inf_\Sigma h$, and by the previous equation we obtain
    \begin{gather*}
        \mathcal{H}(h(p_j))(n-1-\norm{\grad^\Sigma h(p_j)}^2)>-\frac{1}{j},
    \end{gather*}
    which reduces to $\mathcal{H}(h_*)\geq0$ when $j\to\infty$. Since $\mathcal{H}'\geq0$, we get
    \begin{gather} \label{eq:H}
    \mathcal{H}(h)\geq0 \quad \text{on $\Sigma$}.
    \end{gather}

    Let $\eta$ be the unit normal vector of $\Sigma$ in $M$. As $\grad^\Sigma h=\partial_t-\metric{\partial_t}{\eta}\eta$, it follows
    \begin{gather*}
        \norm{\grad^\Sigma h}^2=1-\metric{\partial_t}{\eta}^2.
    \end{gather*}
    Therefore
    \begin{gather*} \label{eq:positive}
        (n-1)-\norm{\grad^\Sigma h}^2=(n-2)+\metric{\partial_t}{\eta}^2\geq0.
    \end{gather*}

    By combining \eqref{eq:laplacian4} and \eqref{eq:H}, we conclude $\Delta^\Sigma h\geq0$ on $\Sigma$, but given that $\sup_\Sigma h<\infty$, the parabolicity of $\Sigma$ implies that $h$ is constant with $\mathcal{H}(h)=0$, and consequently $\Sigma$ is a totally geodesic slice of $M$.
\end{proof}

\subsection{Proof of Theorem \ref{theo:firsteigen}} \label{sec:proof3}

Recall the Rayleigh quotient for the first eigenvalue of the Laplacian $\lambda_1(N)$, 
\[
\lambda_1(N)\leq \frac{\int_N \norm{\nabla \varphi}^2}{\int_N \varphi^2}.
\]
for any smooth function $\varphi$ on $N$. Let $\xi$ be unit normal of $N$ in $\widebar{M}$, and define a function smooth $\varphi$ on $N$ as $\cos(\theta)=\langle \xi, \partial_t\rangle$ in $\Sigma$ and extending it smoothly by zero outside $\Sigma$.
\newline

For $X$ a vector field on $\Sigma$, we differentiate:
\begin{align*}
    X\cos(\theta)=X\langle \xi, \partial_ t\rangle=&\langle \nabla_X\xi, \partial_t\rangle+\langle \xi, \nabla_X\partial_t\rangle\\
    =&-\langle A_P(X), \partial_ t\rangle+\langle \xi, \nabla_X\partial_t\rangle.
\end{align*}

On the other hand, we decompose: $ X=X_P+\langle X,\partial_t  \rangle \partial_t$, where $X_P$ is tangent to $P$. Since $X$ is tangent to $N$, then $\langle\xi, X \rangle=0$ and,
\[
\langle\xi, X_P \rangle=\langle \xi, X-\langle X, \partial_t\rangle\partial_t\rangle=-\langle X,\partial_t  \rangle\cos(\theta).
\]

Using that $\nabla_ X\partial_ t=\mathcal{H}(t)X$ and that $\nabla_{\partial_t}\partial_t=0$, we have: 
\[
X\cos(\theta)=-\langle A_P(X), \partial_t\rangle-\mathcal{H}(t)\langle X, \partial_t \rangle\cos(\theta).
\]

Denote by $T$ the tangential projection of $\partial_t$ to $T_xP$, that is:
\[
T:=\partial_t-\langle\partial_t, \xi \rangle\xi=\partial_t-\cos(\theta)\xi.
\]
Therefore $\norm{T}=\sin(\theta)$. Hence we obtain:
\[
\nabla^N\cos(\theta)=-A_P(T)-\mathcal{H}(t)\cos(\theta)T.
\]

It follows that: 
\[
\norm{\nabla^{N}\cos(\theta)}^2=\norm{A_P(T)}^2+2\mathcal{H}(t)\cos(\theta)\langle A_P(T), T \rangle+\mathcal{H}^2(t)\cos^2(\theta)\sin^2(\theta).\]
Thus:
\[
\int_N \norm{\nabla^{N}\cos(\theta)}^2\ dV
\leq2\int_P \norm{A_P(T)}^2+2\int_P\mathcal{H}^2(t)\cos^2(\theta)\sin^2(\theta),
\]
from which the result follows.



\subsection{Proof of Theorem \ref{theo: Reilly}} \label{sec:proof4}


Let $\psi$ be an eigenfunction on $P$ associated to the eigenvalue $\lambda$, that is:
\[
-\Delta_P\psi=\lambda\psi, \qquad \int_P\psi=0
\]

From the hypothesis of Theorem \ref{theo: Reilly}, via the diffeomorphism $\pi|_{K_t}:K_t\to P$ we define $f_0 := \psi\circ(\pi|_{K_t})^{-1}$ on $K_t$. Observe that since the metric at the slice $\{t\}\times g_P$ is $f^2\cdot g_P$, it implies that the Laplacian rescales $
	\Delta_{K_t}=f^{-2}\Delta_{P}$, 	and therefore we have the relation:
	\begin{align}
		\label{eqn: Eigen}
	-\,\Delta_{K_t} f_0 = \frac{\lambda}{ f^2}\, f_0.
	\end{align}
	
	Let $u\in C^\infty(\overline{\Omega}_t)$ be the solution to the following Dirichlet problem:
	\[
	\begin{cases}
    \Delta_N u= 0 & \mbox{in $\Omega_t,$}\\
	\quad \ u= f_0 & \mbox{in $K_t$}.
		\end{cases}
	\]
Therefore, by \eqref{eqn: Eigen} that on $K_t$:
	\[
	\Delta_{K_t}u = -\frac{\lambda}{ f(t)^2}\, f_0.
	\]
\smallskip
        
	We use the following form of Reilly's formula (see \cite[Lemma A.17]{bCLN}) on the compact manifold  $(\Omega_t,g|_N)$ with boundary $K_t$:
	{\small\[
	\int_{\Omega_t}\Big( (\Delta_N u)^2 - |\nabla_N^2 u|^2 - \Ric_N(\nabla_N u,\nabla_N u)\Big)\,dV
	=
	\int_{K_t}\Big( H_{t}\,u_\nu^2 - 2\,u_\nu\,\Delta_{t}u + \mathrm{II}_{t}(\nabla_{t}u,\nabla_{t}u)\Big)\ dA.
	\]}
    \smallskip
    
	Here $\nu$ is the outward unit normal to $\Omega_t$, $u_\nu$ is the directional derivative of $u$ in the direction of $\nu$; while $\nabla_t$, $H_{t}, \mathrm{II}_{t}$ and $\nabla_t$ are the Laplacian, the mean curvature of $K_t\subset N$, its second fundamental form and the Levi-Civita connection, respectively.
	\newline	
	
	Since $\Delta_N u=0$ in $\Omega_t$, the Reilly's formula becomes:
    \smallskip
    
	\[
	- \int_{\Omega_t}\Big( |\nabla_N^2 u|^2 + \Ric_N(\nabla_N u,\nabla_N u)\Big)\,dV
	=
	\int_{K_t}\Big( H_{t}\,u_\nu^2 - 2\,u_\nu\,\Delta_{t}u + \mathrm{II}_{t}(\nabla_{t}u,\nabla_{t}u)\Big).
	\]

	On the other hand, using \eqref{eqn: Eigen}, the boundary integral in the previous equals
	\[
	\int_{K_t}\Big( H_{t}\,u_\nu^2 + 2\frac{\lambda}{ f^2} f_0\,u_\nu + \mathrm{II}_{t}(\nabla_{t} f_0,\nabla_{t} f_0)\Big).
	\]
	
	Replacing this in the Reilly's formula, we obtain:
	\[
	- \int_{\Omega_t}\Big( |\nabla_N^2 u|^2 + \Ric_N(\nabla_N u,\nabla_N u)\Big)\,dV
	=
	\int_{K_t}\Big( H_t\,u_\nu^2 + 2\frac{\lambda}{ f^2} f_0\,u_\nu
	+ \mathrm{II}_{t}(\nabla_{t} f_0,\nabla_{t} 
    f_0)\Big)\,dA.	\]
\medskip

Then, since $H_t>0$, we complete the square in $u_\nu$
\[
H_t\,u_\nu^2 + 2\frac{\lambda}{f(t)^2} f_0\,u_\nu
=
H_t\left(u_\nu + \frac{\lambda}{f^2 H_t}f_0\right)^2
-
\frac{\lambda^2}{f^4 H_t} f_0^2 .
\]

Since $H_t>0$, the square term is nonnegative, which yields the estimate
\[
H_t\,u_\nu^2 + 2\frac{\lambda}{f^2} f_0\,u_\nu
\ge
-\frac{\lambda^2}{f^4 H_t} f_0^2 .
\]

Substituting this inequality into the boundary integral in Reilly's formula gives
\[
\int_{K_t}\Big( H_t\,u_\nu^2 + 2\frac{\lambda}{f^2} f_0\,u_\nu
+ \mathrm{II}_{t}(\nabla_{t} f_0,\nabla_{t} f_0)\Big)\,dA
\ge
\int_{K_t}\Big(
-\frac{\lambda^2}{f^4 H_t} f_0^2
+ \mathrm{II}_{t}(\nabla_{t} f_0,\nabla_{t} f_0)
\Big)\,dA .
\]

Dropping the nonnegative square on the right-hand side yields the inequality
	\[
	- \int_{\Omega_t}\Ric_N(\nabla_N u,\nabla_N u)\,dV
	\ge
	\int_{K_t}\Big( -\frac{\lambda^2}{f^4 H_t}f_0^2 + \mathrm{II}_t(\nabla_{t}f_0,\nabla_{t}f_0)\Big)\,dA.
	\]

 \medskip

It follows that:

\[
\int_{\Omega_t}\Ric_N(\nabla_N u,\nabla_N u)\,dV
	\ge
	\int_{K_t}\Big( -\frac{\lambda^2}{f^4 H_t}f_0^2 + \mathrm{II}_t(\nabla_{t}f_0,\nabla_{t}f_0)\Big)\,dA.
\]    
\medskip

Therefore, rearranging the inequality we obtain the desired bound.$\qed$.
\newline

Finally, we have the following estimate of $II_t$ for eigenfunctions of the Laplacian on $K_t$, in terms of the warping function $f$. 

\begin{proposition}
Let $(\widebar{M}^{n+1},g)$ be a Riemannian manifold and let 
\[
M^n = I \times_{f} P,\qquad g_M = dt^2 + f(t)^2 g_P,
\]
a warped product with $P$ compact. Keeping the notations from Theorem \ref{theo: Reilly}, we have:
\[
\int_{K_t}\mathrm{II}_t(\nabla_t f_0,\nabla_t f_0)\,dA
\ge \kappa_t\frac{\lambda}{f^2}
\int_{K_t}f_0^2\,dA .
\]
for some constant $\kappa_t$ at the slice at $t\in I$.
\end{proposition}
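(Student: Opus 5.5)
The estimate follows from a pointwise lower bound on the second fundamental form $\mathrm{II}_t$ of $K_t\subset N$, combined with the eigenvalue identity \eqref{eqn: Eigen} integrated by parts. Since $K_t$ is compact (it is diffeomorphic to $P$ via $\pi|_{K_t}$) and $\mathrm{II}_t$ is a smooth symmetric $2$-tensor on $K_t$, I set
\[
\kappa_t:=\min_{x\in K_t}\ \min_{v\in T_xK_t,\ \|v\|=1}\mathrm{II}_t(v,v).
\]
This is the smallest principal curvature of $K_t$ in $N$ with respect to the outward normal $\nu$ of $\Omega_t$. The minimum is attained, because the unit sphere bundle of $K_t$ is compact, so $\kappa_t$ is a finite real number depending only on the slice at $t$. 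Pointwise on $K_t$ this gives
\[
\mathrm{II}_t(\nabla_t f_0,\nabla_t f_0)\ \ge\ \kappa_t\,|\nabla_t f_0|^2 .
\]

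Next I integrate this over $K_t$ and evaluate the Dirichlet energy of $f_0$. Since $K_t$ is closed, integration by parts together with \eqref{eqn: Eigen} gives
\[
\int_{K_t}|\nabla_t f_0|^2\,dA=-\int_{K_t}f_0\,\Delta_{K_t}f_0\,dA=\frac{\lambda}{f^2}\int_{K_t}f_0^2\,dA .
\]
Here $f=f(t)$ is constant on the slice, so it can be pulled out of the integral. Combining the two displays yields
\[
\int_{K_t}\mathrm{II}_t(\nabla_t f_0,\nabla_t f_0)\,dA\ \ge\ \kappa_t\int_{K_t}|\nabla_t f_0|^2\,dA=\kappa_t\frac{\lambda}{f^2}\int_{K_t}f_0^2\,dA .
\]
The first inequality is valid whatever the sign of $\kappa_t$, because the integrand $|\nabla_t f_0|^2$ is nonnegative and the pointwise bound is applied before integrating.

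The main point needing care is \eqref{eqn: Eigen} itself: it relies on the hypothesis of Theorem~\ref{theo: Reilly} that $\pi|_{K_t}$ is an isometry from $(K_t,g)$ onto $(P,f^2g_P)$, which is what makes $\Delta_{K_t}=f^{-2}\Delta_P$ hold. I take this hypothesis as given.

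If one wants an explicit value for $\kappa_t$ in terms of $f$, one can try to compare $\mathrm{II}_t$ with the umbilical second fundamental form $\mathcal H(t)\,g$ of the slice $\{t\}\times P$ in $M$, corrected by the angle between $\nu$ and $\partial_t$. I would only pursue this as a remark. The existence of a constant $\kappa_t$, which is all the statement claims, follows from compactness alone.
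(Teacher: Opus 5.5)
Your proof is correct and follows the paper's argument: the pointwise bound $\mathrm{II}_t \ge \kappa_t\, g_{K_t}$ is combined with the identity $\int_{K_t}|\nabla_t f_0|^2\,dA = \frac{\lambda}{f^2}\int_{K_t} f_0^2\,dA$, obtained by integrating by parts on the closed manifold $K_t$. You also make explicit what the paper only asserts, namely that $\kappa_t$ exists because the unit sphere bundle of $K_t$ is compact; the paper's extra Green's identity step on $\Omega_t$ is not needed for the conclusion.
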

\begin{proof}
The Green's identity on a compact Riemannian manifold in this case reads: 
\[
\int_{\Omega_t}\langle \nabla_t u, \nabla_t v\rangle dV=-\int_{\Omega_t}v\Delta_t u+\int_{K_t}v\cdot u_{\nu}\ dA
\]

Let $u$ be the harmonic extension of $f_0$ in $\Omega_t$, as in the proof of Theorem \ref{theo: Reilly}. Since $u$ is harmonic in $\Omega_t$ we obtain: 
\[
\int_{\Omega_t} \norm{\nabla_N u}^2\,dV
=
\int_{K_t} f_0\,u_\nu\, dA .
\]

Moreover, using $-\Delta_{K_t}f_0=\frac{\lambda}{f(t)^2}f_0$, we compute
\[
\int_{K_t} \norm{\nabla_t f_0}^2\,dA
=
-\int_{K_t} f_0\,\Delta_{t}f_0\,dA
=
\frac{\lambda}{f^2}\int_{K_t} f_0^2\,dA .
\]

For each $t$, there exists a constant $\kappa_t\in\mathbb{R}$ such that $\mathrm{II}_t \ge \kappa_t\, g_{K_t}$. Then
\[
\mathrm{II}_t(\nabla_t f_0,\nabla_t f_0)
\ge
\kappa_t \norm{\nabla_t f_0}^2 ,
\]
and therefore:
\[
\int_{K_t}\mathrm{II}_t(\nabla_t f_0,\nabla_t f_0)\,dA
\ge
\kappa_t\int_{K_t}\norm{\nabla_t f_0}^2\,dA
=
\kappa_t\frac{\lambda}{f^2}
\int_{K_t}f_0^2\,dA .
\]
\end{proof}



\noindent{\bf  Funding}
J. Mel\'endez is supported by the 2025 Special Program for Teaching and Research Project Funding at CBI-UAMI.


\nocite{*}
\bibliographystyle{acm}

\end{document}